\newtheorem{theorem}{Theorem}[section]
\newtheorem{lemma}[theorem]{Lemma}
\newtheorem{proposition}[theorem]{Proposition}
\newtheorem{corollary}[theorem]{Corollary}
\newtheorem*{claim}{Claim}
\newcommand{\Z}{\mathbb{Z}} 					
\newcommand{\N}{\mathbb{N}}					
\newcommand{\Q}{\mathbb{Q}}					
\newcommand{\R}{\mathbb{R}}					
\newcommand{\T}{\mathbb{T}}					
\newcommand{\A}{\mathcal{A}}					
\newcommand{\Dom}{X_d}						
\newcommand{\cI}[1]{\widehat{#1}}				
\newcommand{\cR}{\mathcal{R}}					
\newcommand{\cZ}{\mathcal{Z}}					
\newcommand{\cZE}{{\widehat{\cZ}}}				
\newcommand{\Fd}[1]{\{1, \dots, #1\}}				
\newcommand{\twr}{\mathcal{T}}
\newcommand{\be}{\begin{equation}}
\newcommand{\ee}{\end{equation}}
\newcommand{\Function}[5]{\begin{array}{cccc} #1 : & #2 & \rightarrow & #3 \\ & #4 & \mapsto & #5 \end{array}}
\newcommand{\orth}[1]{#1^\bot}
\date{}
\begin{document}
\author{Frank Trujillo}
\address{Institut für Mathematik, Universität Zürich, Winterthurerstrasse 190, CH-8057 Zürich, Switzerland}
\email{frank.trujillo@math.uzh.ch}
\title[On the HD of inv. measures of piecewise smooth circle maps]{On the Hausdorff dimension of invariant measures of piecewise smooth circle homeomorphisms}
\begin{abstract}
We show that, generically, the unique invariant measure of a sufficiently regular piecewise smooth circle homeomorphism with irrational rotation number and zero mean nonlinearity (e.g., piecewise linear) has zero Hausdorff dimension. 

To encode this generic condition, we consider piecewise smooth homeomorphisms as \emph{generalized interval exchange transformations} (GIETs) of the interval and rely on the notion of \emph{combinatorial rotation number} for GIETs, which can be seen as an extension of the classical notion of rotation number for circle homeomorphisms to the GIET setting.
\end{abstract}
\maketitle

\section{Introduction}

Recall that an \emph{irrational circle homeomorphism} $f: \T \to \T$, that is, a continuous bijection on $\T = \R / \Z$ with no periodic points, is uniquely ergodic \cite{furstenberg_strict_1961}. Moreover, this map is topologically conjugated to an irrational rotation $x \mapsto x + \alpha$ on $\T$ if and only if it does not admit wandering intervals, the latter condition being guaranteed if, for example, the map is piecewise smooth and its derivative has bounded variation. In this case, its unique invariant probability measure $\mu_f$ can be expressed as the pushforward $\mu_f = h_\ast \textup{Leb}$ of the Lebesgue measure on $\T$ by the conjugacy map $h :\T \to \T$. 

The aim of many recent works has been to understand more deeply this unique invariant probability measure under different assumptions on the map. Let us point out that fine statistical properties of $\mu_f$ are closely related to the (lack of) regularity of $h$, but, in general, it is not possible to study the conjugacy map directly in order to understand dimensional properties of the invariant measure. Nevertheless, the existence of this conjugacy allows the transport of valuable techniques, such as \emph{dynamical partitions} (see Section \ref{sc: dynamical_partitions}), from rigid rotations to more general maps, and to use these tools to study geometric properties of the associated invariant measures. 

Of particular interest to us will be the notion of Hausdorff dimension of a probability measure $\mu$ on $\T$, defined as \[\dim_H(\mu) := \inf \{ \dim_H(X) \mid \mu(X) = 1 \}, \] where $\dim_H(X)$ denotes the Hausdorff dimension of the set $X$. Intuitively, this is a way to assess the `size' of the set where the measure is concentrated whenever the measure is singular with respect to Lebesgue. 

For circle diffeomorphisms, it follows from the works of M. Herman \cite{herman_sur_1979} and J.C. Yoccoz \cite{yoccoz_conjugaison_1984} that sufficiently regular circle diffeomorphisms are smoothly conjugated to a rigid rotation provided its rotation number is Diophantine. Hence, for any smooth circle diffeomorphism with a Diophantine rotation number, its unique invariant measure is equivalent to the Lebesgue measure, and therefore, its Hausdorff dimension is equal to one. On the other hand, for any $0 \leq \beta \leq 1$ and any \textit{Liouville} number $\alpha$, that is, any non-Diophantine irrational number, V. Sadovskaya \cite{sadovskaya_dimensional_2009} has constructed examples of smooth diffeomorphisms with rotation number $\alpha$ whose unique invariant measure has Hausdorff dimension $\beta$. In the analytic category, V. Arnold showed the existence of analytic circle maps with Liouville rotation number whose conjugacy to the circle rotation is non-differentiable.

As for \emph{critical circle maps with power-law criticalities}, that is, smooth diffeomorphisms with a finite number of singular points where the derivative vanishes and where the map behaves as $x \mapsto x|x|^p + c$ in the neighborhood of the critical point, for some $p > 0$ and $c \in \R$ which may depend on the critical point, K. Khanin \cite{khanin_universal_1991} proved that the unique invariant measure of any sufficiently regular irrational critical circle map is singular with respect to the Lebesgue measure. If, in addition, the rotation number of the map is of bounded type, J. Graczyk and G. Świątek \cite{graczyk_singular_1993} showed that the Hausdorff dimension of the unique invariant measure is bounded away from $0$ and $1$. More recently, the author \cite{trujillo_hausdorff_2020} provided explicit bounds, depending only on the arithmetic properties of the rotation number, for the Hausdorff dimension of these maps.

In this work, we study the \emph{Hausdorff dimension} of the unique invariant probability measures of certain piecewise smooth circle homeomorphisms, known as \emph{$P$-homeomorphisms} or \emph{circle diffeomorphisms with breaks}. 
 We restrict ourselves to the setting of maps with \emph{zero mean nonlinearity}, as the non-zero case has been recently treated by K. Khanin and S. Kocić in \cite{khanin_hausdorff_2017}. We refer the reader to Section \ref{sc: P-homeomorphisms} for the relevant definitions. 
 
 We aim to show that for `most' of these maps, the associated unique invariant probability measure has zero Hausdorff dimension. For clarity of exposition, we postpone the exact statement of this result to Section \ref{sc: results}. However, let us mention that to give sense to the word `most' in the previous statement, we rely on the notion of \emph{combinatorial rotation number} (see Section \ref{sc: combinatorial_rotation_number}), which is widely used in the theory of interval exchange transformations and can be seen as an extension of the classical notion of rotation number for circle maps. 
 
Recall that a homeomorphism of the circle $f: \T \to \T$ is called a \emph{P-homeomorphism} or a \emph{circle diffeomorphisms with breaks} if it is a smooth orientation preserving homeomorphism, differentiable away from countable many points, so-called \emph{break-points}, at which left and right derivatives, denoted by $Df_-$, $Df_+$ respectively, exist but do not coincide, and such that $\log Df$ has bounded variation. 
A P-homeomorphism that is linear in each domain of differentiability is called \emph{PL-homeomorphism}. 

Let us mention that due to a recent result of P. Berk and the author \cite{berk_rigidity_2022}, which shows that a typical sufficiently smooth P-homeomorphism without periodic points and zero mean nonlinearity is smoothly conjugated to a PL-homeomorphism, to prove our main result (Theorem \ref{thm: HD_multibreak}) it suffices to consider PL-homeomorphisms.

The invariant measures of PL-homeomorphisms were first studied by M. Herman \cite{herman_sur_1979}, who showed that a PL-homeomorphism with exactly two break points and irrational rotation number has an invariant measure absolutely continuous with respect to Lebesgue if and only if its break points lie on the same orbit. 

More generally, I. Liousse \cite{liousse_nombre_2005} showed that the invariant measure of a generic PL-homeomorphism with a finite number of break points and irrational rotation number of bounded type is singular with respect to Lebesgue. The generic condition in \cite{liousse_nombre_2005} is explicit and appears as an arithmetic condition on the logarithm of the slopes of the PL-homeomorphism. 

For general P-homeomorphisms with exactly one break point and irrational rotation number, A. Dzhalilov and K. Khanin \cite{dzhalilov_invariant_1998} showed that the associated invariant probability measure is singular with respect to Lebesgue. The case of two break points has been studied by A. Dzahlilov, I. Liousse \cite{dzhalilov_circle_2006} in the bounded rotation number case, and by A. Dzahlilov, I. Liousse and D. Mayer \cite{dzhalilov_singular_2009} for arbitrary irrational rotation numbers. In both works, the authors conclude the singularity of the associated invariant probability measure. 

More recently, for P-homeomorphisms of class $C^{2 + \epsilon}$ with a finite number of break points and nonzero mean nonlinearity, K. Khanin and S. Kocić \cite{khanin_hausdorff_2017} showed that the Hausdorff dimension of their unique invariant measure is equal to $0$, provided that their rotation number belongs to a specific (explicit) full-measure set of irrational numbers. In the same work, the authors show that this result cannot be extended to all irrational rotation numbers. 

Recall that the \emph{mean nonlinearity} of a piecewise $C^2$ circle map such that $D\log Df \in L^1$ is given by $\mathcal{N}(f) = \int_{\T} D\log Df(x) dx,$  where, as an abuse of notation, for any $k \geq 1$, we use $D^kf$ to denote the $k$-th derivative of any lift $F: \R \to \R$ of $f$.

In the following, we will introduce the core notions used throughout this work and briefly overview various results in different settings. We will focus our discussion on the piecewise smooth case, which will be this work's main object of study. We will finish this introduction with a brief discussion of the difficulties of extending the techniques used in previous works and by providing precise statements of our main results.

\section*{Acknowledgements} 
I thank Corinna Ulcigrai for her constant support and fruitful discussions during the preparation of this work. The author was supported by the  {\it Swiss National Science Foundation} through Grant $200021\_188617/1$.

\section{Preliminaries}
\subsection{Circle maps}

Let us quickly recall some of the properties of circle maps that will be used throughout this work.

\subsubsection{Rotation number}

Let $f : \T \rightarrow \T$ be an orientation-preserving circle homeomorphism and let $F : \R \rightarrow \R$ be a \emph{lift} of $f$, that is, a continuous homeomorphism of $\R$ such that $F(x+1) = F(x) + 1$ and $F (x) (\bmod$ $1) = f(x)$ for all $x \in \R$. By a classical result of Poincaré, the limit 
\[ \rho(f) = \lim_{n \to \infty} \dfrac{F^n(x)}{n} \mod 1,\]
is well-defined and independent of the value $x \in \R$ initially chosen. This limit is called the \textit{rotation number} of $f$. By the Poincaré's classification theorem for circle maps, any transitive orientation preserving circle homeomorphism with irrational rotation number $\alpha \in \R \setminus \Q$ is conjugate to the \emph{rigid rotation} $R_\alpha: \T \to \T$ given by $R_\alpha(x) = x + \alpha$, for any $x \in \T$. In particular, such a map is uniquely ergodic. 

\subsubsection{Dynamical partitions}
\label{sc: dynamical_partitions}

Given a circle homeomorphism $f: \T \to \T$ topologically conjugated to an irrational rotation $R_\alpha$, a classical way to study fine statistical properties of its unique invariant measure $\mu_f$ is to consider the so called \emph{dynamical partitions} of $f$, which are defined through the denominators $(q_n)_{n \geq 0}$ of the \emph{convergents}
\[ \dfrac{p_n}{q_n} = [a_1,a_2,a_3,\dots,a_n],\]
associated to the \emph{continued fraction} of $\alpha$
\[ \alpha = \cfrac{1}{a_1 + \cfrac{1}{a_2+ \cfrac{1}{a_3 +\cfrac{1}{\cdots}}}} = [a_1,a_2,a_3,\dots],\] 
as follows: Given $x_0 \in \T$ and $n \geq 1$, the $n$-th \emph{dynamical partition} $\mathcal{P}_n(x_0)$ of $f$, with base point $x_0$, is given by
\[ \mathcal{P}_n(x_0) = \lbrace I_{n - 1}^0(x_0), \dots ,I_{n - 1}^{q_n - 1}(x_0) \rbrace \cup \lbrace I_n^0(x_0),\dots ,I_n^{q_{n-1}-1}(x_0) \rbrace,\]
where $I_m^i(x_0) = f^i(I_m(x_0))$ is the $i$-th iterate of the circle arc given by
\begin{equation}
\label{eq: fundamental_interval}
I_m(x_0) = \left\{ \begin{array}{lcl} [x_0, f^{q_m}(x_0)) & \text{ if }& m \text{ is even}, \\ \left[f^{q_m}(x_0), x_0\right) & \text{ if }& m \text{ is odd}.\end{array} \right.
\end{equation}
These partitions form a refining sequence. In fact, it is easy to see that $I_{n + 1}(x_0)\subset I_{n - 1}(x_0),$ for any $n \geq 1$. Moreover, they verify
 \begin{equation}
\label{eq: refining_property}
 I_{n - 1}^i(x_0)\setminus I_{n + 1}^i(x_0)= \bigcup_{j = 0}^{a_{n + 1} - 1} I_{n}^{i + q_{n - 1} + jq_{n}}(x_0),
 \end{equation}
 for all $0 \leq i < q_n$ and all $n \geq 1.$ Notice that the RHS of (\ref{eq: refining_property}) is a disjoint union of $a_{n+1}$ different iterates of $I_n$. Furthermore, each iterate is \emph{adjacent} to the next one when seen as arcs in the circle; that is, they share a common endpoint.
 
Geometric properties of these partitions are closely related to dimensional properties of the subjacent invariant measure, see for example \cite{khanin_universal_1991}, \cite{khanin_hausdorff_2017}, \cite{trujillo_hausdorff_2020}.

\subsubsection{Renormalization}

The renormalization maps of $f$ are closely related to the dynamical partitions and are often used to deduce geometric properties of these partitions. The \emph{$n$-th renormalization of $f$}, with base point $x_0$, is the map $f_n: I_{n - 1}(x_0) \cup I_n(x_0) \to I_{n - 1}(x_0) \cup I_n(x_0)$ given by
\begin{equation}
\label{eq: renormalization}
 f_n(x) = \left\{ \begin{array}{lcl} f^{q_{n-1}}(x) & \text{ if }& x \in I_n(x_0), \\ f^{q_n}(x) & \text{ if }& x \in I_{n - 1}(x_0). \end{array} \right.
\end{equation}
It is not difficult to check that the map $f_n$ is the first return map of $f$ to $I_{n - 1}(x_0) \cup I_n(x_0).$

\subsection{Interval exchange transformations.}

Let $I=[0,1)$ be the unit interval. An \emph{interval exchange transformation} (IET) is a bijective, right-continuous function $T\colon I \to I$, with a finite number of discontinuities, whose restriction to any subinterval of continuity is given by a translation. This means that the interval $I$ admits a decomposition into a finite number of subintervals which $T$ permutes. For notational simplicity, we denote this partition by $\{ I_\alpha\}_{\alpha \in \A}$, where the indexes belong to some finite alphabet $\A$ with $d \geq 2$ symbols.

An IET $T$ of $d$ subintervals can be encoded by a pair $(\lambda, \pi)$ corresponding to a \emph{combinatorial datum} $\pi = (\pi_t, \pi_b)$, consisting of two bijections $\pi_t, \pi_b : \A \rightarrow \Fd{d}$ describing the order of the intervals before and after $T$ is applied (the letters \emph{t, b} stand for \emph{top} and \emph{bottom}), and a \emph{lengths vector} $\lambda = (\lambda_\alpha)_{\alpha \in \A}$ in the simplex $\Delta_{d} = \{ {\nu}\in \mathbb{R}_+^\A \mid \sum_{\alpha\in\A}\nu_\alpha=1\}$, which corresponds to the lengths of the intervals in the partition $\{ I_\alpha\}_{\alpha \in \A}$ associated to $T.$ We call $\pi_b \circ \pi_t^{-1} : \Fd{d} \to \Fd{d}$ the \emph{monodromy invariant} of $\pi$.

A combinatorial datum $\pi = (\pi_t, \pi_b)$ is said to be of \emph{rotation type} if its monodromy invariant verifies
\[ \pi_b \circ \pi_t^{-1}(i) - 1 = i + k \textup{ (mod } d),\]
for some $k \in \{0, \dots, d - 1\}$ and all $i \in \Fd{d}$. Similarly, we say that an IET is of \emph{rotation type} if its combinatorial datum is of rotation type. 

Given an IET $T$ with associated partition $\{ I_\alpha\}_{\alpha \in \A}$, we can obtain an explicit expression for the intervals $I_\alpha$ as $[l_\alpha, r_\alpha)$ where $$l_\alpha = \sum_{\pi_t(\beta) < \pi_t(\alpha)} \lambda_\beta, \hskip1cm r_\alpha = l_\alpha + \lambda_\alpha.$$ Notice that $\{l_\alpha\}_{\pi_t(\alpha) \neq 1}$ are the only possible discontinuity points of $T$. With these notations
\[ T(x) = x + w_\alpha,\]
for any $x \in I_\alpha$ and any $\alpha \in \A$, where
\[ w_\alpha = \sum_{\pi_b(\beta) < \pi_b(\alpha)} \lambda_\beta - \sum_{\pi_t(\beta) < \pi_t(\alpha)} \lambda_\beta.\]
We denote by $w = (w_\alpha)_{\alpha \in \A}$ the \textit{translation vector} of the IET $T$. Notice that $w_\alpha$ can be expressed as a linear transformation on $\R^\A$ as $w_\alpha = \Omega_\pi(\lambda)$, where $\Omega_\pi : \R^A \rightarrow \R^A$ is given by 
\begin{equation}
\label{eq: intersection_matrix}
\Omega_{\alpha, \beta} = \left\{ \begin{array}{cl} +1 & \text{if } \pi_b(\alpha) > \pi_b(\beta) \text{ and } \pi_t(\alpha) < \pi_t(\beta), \\
-1 & \text{if } \pi_b(\alpha) < \pi_b(\beta) \text{ and } \pi_t(\alpha) > \pi_t(\beta), \\
0 & \text{in other cases.} 
\end{array}\right.
\end{equation}
We say that the pair $\pi = (\pi_0, \pi_1)$ is \textit{reducible} if there exists $1 \leq k < d$ such that 
\[ \pi_1 \circ \pi_0^{-1} (\Fd{k}) = \Fd{k}.\]
Otherwise, it is said to be \textit{irreducible}. An IET is said to be reducible (resp. irreducible) if the associated combinatorial datum $\pi$ is reducible (resp. irreducible). We say that an IET $T$ satisfies the \emph{Keane condition} if the orbits of the possible discontinuities $l_\alpha$, with $\alpha \in \A$ and such that $\pi_t(\alpha)\neq 1$, are infinite and disjoint. In particular, any IET verifying the previous condition is irreducible. Recall that a transformation on a metric space is said to be \emph{minimal} if the orbit of all points is dense. By \cite{keane_interval_1975}, any IET satisfying Keane's condition is minimal.

\subsubsection{Rauzy-Veech renormalization}
\label{sc: RV}

Let $T$ be an IET with $d$ subintervals encoded by $(\lambda, \pi)$ and such that $\lambda_{\alpha_t} \neq \lambda_{\alpha_b}$. Denote
\[\alpha_s = \pi_s^{-1}(d),\] for $s = t, b$. The letters $\alpha_t$ and $\alpha_b$ correspond to the `last' intervals (that is, those having $1$ as their right endpoint) in the partitions $\{ I_\alpha\}_{\alpha \in \A}$ and $\{ T(I_\alpha)\}_{\alpha \in \A}$, respectively. By comparing the lengths of these intervals, we define the \emph{type} of $T$ as
\[ \epsilon(\lambda, \pi) = \left\{ \begin{array}{cl}
0 & \text{ if } \lambda_{\alpha_t} > \lambda_{\alpha_b}, \\ 
1 &\text{ if }\lambda_{\alpha_t} < \lambda_{\alpha_b}.
 \end{array}\right.\]
The longest of these two intervals is sometimes referred to as the \textit{winner} and the shortest as the \textit{loser}. Notice that $\alpha_{\epsilon(\lambda, \pi)}$ and $\alpha_{1 - \epsilon(\lambda, \pi)}$ correspond to the symbols of the winner and the loser intervals respectively. If there is no risk of confusion, we will denote these letters simply by $\alpha_\epsilon$ and $\alpha_{1 - \epsilon}$. We will sometimes refer to types $0$ and $1$ as \emph{top} and \emph{bottom} respectively. 

The \emph{Rauzy-Veech induction} of $T$, which we denote by $\cI{T}$, is defined as the first return map of $T$ to the subinterval 
\[ \cI{I} = \left\{ \begin{array}{cl}
I \,\setminus\, T\left(I_{\alpha_b}\right) & \text{ if } T \text{ is of type top}, \\ 
I \,\setminus \,I_{\alpha_t} &\text{ if } T \text{ is of type bottom}.
 \end{array}\right.\]
We define the \emph{Rauzy-Veech renormalization} of $T$, which we denote by $\cR(T)$, by rescaling linearly $\cI{T}$ to the interval $I$. This renormalization will be an IET with the same number of subintervals as $T$. Furthermore, it is not difficult to check that this process can be iterated infinitely many times if and only if $T$ verifies Keane's condition. Also, it follows quickly from the definition that each infinitely renormalizable pair $(\lambda, \pi)$ will admit exactly two preimages. We refer the interested reader to \cite{viana_ergodic_2006} for proof of these facts.

Denote by $\mathfrak{G}_d$ the \emph{set of combinatorial data} $\pi = (\pi_t, \pi_b)$ with $d$ symbols and let $\mathfrak{G}_d^0 \subset \mathfrak{G}_d$ be the subset of \emph{irreducible combinatorial data}, which we equip with the counting probability measure $d\pi$. We will sometimes refer to a combinatorial datum in $\mathfrak{G}_d$ simply as a \emph{permutation}. 

Let us point out that for a fixed $\pi \in \mathfrak{G}_d^0$ and for any $\lambda \in \Delta_d$ such that $(\lambda, \pi)$ is renormalizable, there are only two possibilities for the combinatorial datum of $\cR(\lambda, \pi)$, depending on whether the IET is of top or bottom type. For notational simplicity, for any $\pi \in \mathfrak{G}^0_d$ and for any $\epsilon \in \{0, 1\}$, we let $\Delta_\pi = X_d \cap (\Delta_d \times \{\pi\})$ and denote by $\Delta_{\pi, \epsilon}$ the set of of IETs in $\Delta_\pi$ of type $\epsilon$. 

Notice that the set $\Dom$ of pairs $(\lambda, \pi)$ verifying Keane's condition is $\cR$-invariant and of full measure in $\Delta_d \times \mathfrak{G}_d^0$, with respect to $\textup{Leb} \times d\pi$. Hence $$\cR: \Dom \rightarrow \Dom$$ is a well defined 2 to 1 map defined in a full measure subset $\Dom \subset \Delta_d \times \mathfrak{G}_d^0$. In fact, it is easy to see that any $(\lambda, \pi) \in X_d$ has exactly two preimages, one of type top and one of type bottom. Moreover, for any $\pi \in \mathfrak{G}^0_d$ and for any $\epsilon \in \{0, 1\}$, the map $\cR\mid_{\Delta_{\pi, \epsilon}}: \Delta_{\pi, \epsilon} \to \Delta_{\pi(\epsilon)}$ is bijective, where we denote by $\pi(\epsilon)$ the permutation obtained from $\pi$ after a Rauzy-Veech renormalization of type $\epsilon$.

 It was shown independently by H. Masur \cite{masur_interval_1982} and W. Veech \cite{veech_gauss_1982} that $\cR$ it admits an infinite ergodic invariant measure $\mu_\cR$, absolutely continuous with respect to $\textup{Leb} \times d\pi$. Moreover, the measure $\mu_\cR$ is unique up to product by a scalar.

\subsubsection{Rauzy classes}

Given $\pi, \pi' \in \mathfrak{G}_d$, the permutation $\pi'$ is said to be a \textit{sucessor} of $\pi$ if there exist $\lambda, \lambda' \in \Delta_d$ such that $(\pi', \lambda') = \cR(\pi, \lambda).$ We denote this relation by $\pi \rightarrow \pi'$. Notice that any successor of an irreducible permutation is also irreducible. 

The relation `$\rightarrow$' defines an oriented graph structure on the set of irreducible permutations $\mathfrak{G}^0_d$. We call \textit{Rauzy classes} the connected components of the oriented graph $\mathfrak{G}^0_d$ with respect to the successor relation. 

Let us point out that if $\pi, \pi'$ belong to the same Rauzy class, then an oriented path exists in $\mathfrak{G}^0_d$ from $\pi$ to $\pi'$. For combinatorial data of rotation type, we have the following.

\begin{proposition}
For any $d \geq 2$, the permutations of rotation type belong to the same Rauzy class in $\mathfrak{G}^0_d$.
\end{proposition}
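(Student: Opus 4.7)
My plan is to exhibit, for each pair of rotation-type permutations in $\mathfrak{G}^0_d$, an explicit Rauzy-Veech path connecting them. After the natural alphabet-relabeling identifications, the rotation-type permutations are parametrized by $k \in \{0, 1, \dots, d-2\}$: one can take the canonical representative $\pi_k = (\id, \sigma^{k+1})$, where $\sigma$ denotes the cyclic shift $i \mapsto i + 1 \pmod d$ on $\{1, \dots, d\}$. It suffices to show that each $\pi_k$ lies in the Rauzy class of $\pi_0$.

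The conceptual input is that a rotation-type IET is, up to rescaling the base interval, a genuine circle rotation $R_\alpha$ cut at $d$ points, and the Rauzy-Veech renormalization amounts to taking the first-return map of $R_\alpha$ to a sub-arc. Since the first return of a rotation to any sub-arc is again a rotation, iterating Rauzy-Veech induction preserves the rotation-type locus of IETs (whenever the number of continuity intervals is not reduced). This means that, varying the length vector, the sequence of combinatorial data visited by successive RV steps remains (with positive density in the iterations) inside $\{\pi_0, \dots, \pi_{d-2}\}$.

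To turn this observation into connectivity in the Rauzy graph, I would proceed by induction on $d$ -- the case $d = 2$ being trivial -- and verify by a short explicit computation (of the type I carried out by hand for $d = 3$ and $d = 4$) that a specific finite sequence of type-top and type-bottom moves carries $\pi_k$ into $\pi_{k-1}$, modulo alphabet relabeling. Chaining these elementary steps yields an RV path from every $\pi_k$ to $\pi_0$. Since Rauzy classes are strongly connected components of the successor graph (as already noted in the excerpt), this is enough to conclude that all rotation-type permutations lie in a single Rauzy class.

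The main obstacle is the combinatorial bookkeeping: the winning and losing letters, as well as the position of the inserted letter, shift after each RV move, and the exact sequence of moves needed to realize the transition $\pi_k \to \pi_{k-1}$ depends on both $k$ and $d$ in a somewhat intricate way. A cleaner but less elementary alternative would be to invoke Veech's correspondence between Rauzy classes and connected components of strata of Abelian differentials: the suspensions of rotation-type IETs are tori with $d-1$ regular marked points, i.e., they lie in the (connected) minimal stratum $\mathcal{H}(0^{d-1})$, which immediately yields the result.
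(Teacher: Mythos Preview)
The paper does not prove this proposition; it is stated without argument, presumably as a known fact. There is therefore nothing in the paper to compare your attempt against.

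On the substance of your proposal: the alternative route you mention at the end --- suspending rotation-type IETs to flat tori with marked points, i.e.\ to the stratum $\mathcal{H}(0^{d-1})$, and invoking the correspondence between (extended) Rauzy classes and connected components of strata --- is the standard argument and is correct.

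Your primary combinatorial approach, by contrast, is only an outline, and two points would need real work before it becomes a proof. First, Rauzy moves do \emph{not} preserve the rotation-type locus: already for $d=3$, a single bottom move applied to $\pi^*=(1\,2\,3\mid 2\,3\,1)$ produces $(1\,3\,2\mid 2\,3\,1)$, whose monodromy $(1\;3)(2)$ is not a cyclic shift. The heuristic ``first return of a rotation is a rotation'' is true for the underlying $2$-IET, but the $d$-letter combinatorial datum also tracks $d-2$ fake discontinuities whose positions scramble the permutation, so the path from $\pi_k$ to $\pi_{k-1}$ must pass through non-rotation-type vertices. Second, you argue ``modulo alphabet relabeling'', but in the paper's convention Rauzy classes are defined on a fixed alphabet $\A$, and relabeling is not a Rauzy move; one still has to check that the Rauzy class of $\pi^*$ already contains a representative of \emph{each} cyclic monodromy on the \emph{same} alphabet. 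That is precisely the content of the ``short explicit computation'' you defer, and it is where all the combinatorial work sits. Without it, the direct argument remains a plan rather than a proof.
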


\subsubsection{GIETs and combinatorial rotation number}
\label{sc: combinatorial_rotation_number}

A \emph{generalized interval exchange transformation} (GIET) is a bijective, right-continuous function $T\colon I \to I$ with a finite number of discontinuities, such that its restriction to any subinterval of continuity is given by a smooth function whose derivative is non-negative and extends to the closure of the subinterval. 

Rauzy-Veech renormalization, initially defined only for IETs, extends trivially to the context of GIETs. An infinitely renormalizable GIET $f$ defines a unique path $\gamma(f)$ on the Rauzy diagram, which we call the \emph{combinatorial rotation number} or simply the \emph{rotation number} of $f$. The notion of rotation number for GIETs is now classical and goes back to the works of S. Marmi, P. Moussa and J. C. Yoccoz \cite{marmi_affine_2010}, \cite{marmi_linearization_2012}.

An infinite path on the Rauzy diagram is called \emph{$\infty$-complete} if each letter in $\A$ wins infinitely many times. We say that a GIET is \emph{irrational} if it is infinitely renormalizable and if its rotation number is $\infty$-complete. 

 Let us point out that in the case of GIETs, the role played by IETs and combinatorial rotation numbers are analogous to that of rigid rotations and rotation numbers in the case of circle homeomorphisms. Indeed, an infinite path in a Rauzy diagram is associated with some infinitely renormalizable IET if and only if it is $\infty$-complete. Also, two infinitely renormalizable IETs are conjugated if and only if they have the same rotation number. Moreover, any irrational GIET is semi-conjugated to a unique IET with the same rotation number. Analogously to the circle case, this semi-conjugacy is a conjugacy if the GIET does not admit wandering intervals. We refer the interested reader to \cite{yoccoz_echanges_2005} for proof of these facts.

Given the previous discussion, we can think of the combinatorial rotation number for irrational GIETs on $d$ intervals as taking values on $\Delta_d \times \mathfrak{G}_d^0.$ This will allow us to speak in the following of \emph{almost every rotation number} for GIETs. As an abuse of notation, we will sometimes write $\gamma(T) = (\lambda, \pi)$ to state that a GIET $T$ and a standard IET $T_0$ associated to some $(\lambda, \pi) \in X_d$ have the same rotation number. 

\subsubsection{The lengths cocycle}
\label{sc: lengths_cocycle}

Let $T = (\lambda, \pi)$ verifying Keane's condition and define 
\begin{equation} 
\label{eq: RVmatrix}
A(T) = I_{d \times d} + E_{\alpha_\epsilon, \alpha_{1 - \epsilon}},
\end{equation}
where $I_{d \times d}$ denotes the identity matrix and $E_{i, j}$ is the matrix whose entries are $1$ at the position $(i, j)$ and $0$ otherwise. Notice that the matrix $A(T)$ depends only on the combinatorial datum $\pi$ and on the type $\epsilon(\lambda, \pi).$ Denote
\[ A^n(T) = A(T) \dots A({\cR^n(T)}).\] Then the lengths vector of $\cR^n(T)$ is given by 
\[\frac{A^n(T) ^{-1}\lambda}{|A^n(T) ^{-1}\lambda|_1}.\]
The map \[\Function{A^{-1}}{\Dom}{SL(d, \Z)}{T}{A(T)^{-1}}\] is a cocycle over $\cR$, known as the \emph{Rauzy-Veech cocycle} or \emph{lengths cocycle}. 

The following observation will be of fundamental importance. For a proof see \cite[Proposition 7.6]{yoccoz_interval_2010}.

\begin{proposition}
\label{prop: trivial_action}
The set
\[\bigcup_{\pi \in \mathfrak{G}^0_d} \Delta_\pi \times \textup{Ker}(\Omega_\pi),\]
where $\Omega_\pi: \R^\A \to \R^\A$ is given by (\ref{eq: intersection_matrix}), is invariant by the lengths cocycle
\[\Function{\mathcal{L}}{X_d \times \R^n}{X_d \times \R^n}{(T, v)}{(\cR(T), A(T)^{-1}v)}\]
and its action is trivial on it. 

More precisely, one can choose a basis of row vectors of $\textup{Ker}(\Omega_\pi)$, for every $\pi \in \mathfrak{G}^0_d$, such that for any $(\lambda, \pi) \in \Delta_d \times \mathfrak{G}^0_d$ infinitely renormalizable and for any $n \in \N$, the transformation 
\[A^n(\lambda, \pi)^{-1}\mid_{\textup{Ker}(\Omega_\pi)}: \textup{Ker}(\Omega_\pi) \to \textup{Ker}(\Omega_{\pi'})\] is the identity with respect to the selected bases, where $\pi' \in \mathfrak{G}^0_d$ is such that $\mathcal{R}^n(\lambda, \pi) \in \Delta_{\pi'}$. In particular, if $\pi' = \pi$, then $A^n(\lambda, \pi)^{-1}$ acts as the identity on $\textup{Ker}(\Omega_\pi).$
\end{proposition}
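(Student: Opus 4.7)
The plan is to split the proposition into two self-contained claims: (a) that $A(T)^{-1}$ sends $\ker(\Omega_\pi)$ into $\ker(\Omega_{\pi'})$, where $\pi'$ is the combinatorial datum of $\cR(T)$; and (b) that there exist bases of each $\ker(\Omega_\pi)$, chosen once and for all for each $\pi \in \mathfrak{G}^0_d$, with respect to which the one-step map $A(T)^{-1}: \ker(\Omega_\pi) \to \ker(\Omega_{\pi'})$ has the identity matrix. The statement for the $n$-step cocycle $A^n(T)^{-1}$ then follows by composition.

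For step (a), the key input is the ``symplectic'' identity
\[ A(T)^t \, \Omega_\pi \, A(T) = \Omega_{\pi'},\]
which I would prove directly from (\ref{eq: intersection_matrix}) and (\ref{eq: RVmatrix}). Since $A(T) = I + E_{\alpha_\epsilon, \alpha_{1-\epsilon}}$ is a rank-one perturbation of the identity, the left-hand side differs from $\Omega_\pi$ only through adding the $\alpha_{1-\epsilon}$-row (resp.~column) of $\Omega_\pi$ to its $\alpha_\epsilon$-row (resp.~column), plus a diagonal correction at the $(\alpha_\epsilon, \alpha_\epsilon)$ entry. On the other hand, $\Omega_{\pi'}$ differs from $\Omega_\pi$ precisely through the re-positioning of $\alpha_{1-\epsilon}$ in one of the two orderings of $\pi'$, which affects only entries of $\Omega$ involving $\alpha_{1-\epsilon}$. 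A short case analysis, splitting on the type $\epsilon \in \{0,1\}$ of $T$ and on the positions of $\alpha, \beta \in \A$ relative to $\{\alpha_\epsilon, \alpha_{1-\epsilon}\}$, verifies that both sides agree. Multiplying the identity on the right by $A(T)^{-1}$ gives $A(T)^t \Omega_\pi v = \Omega_{\pi'} A(T)^{-1} v$ for every $v$, so $\Omega_\pi v = 0$ implies $\Omega_{\pi'} A(T)^{-1} v = 0$, i.e.\ $A(T)^{-1}$ sends $\ker(\Omega_\pi)$ into $\ker(\Omega_{\pi'})$.

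For step (b), the bases arise from the suspension / zippered-rectangles construction. To each $\pi \in \mathfrak{G}^0_d$ one attaches a permutation $\sigma_\pi$ on the endpoints $\{0, 1, \dots, d\}$ of the subintervals, whose cycles are in natural bijection with the marked points of the translation surface associated to any IET with combinatorial datum $\pi$. Each nontrivial cycle $c$ of $\sigma_\pi$ yields a vector $v_c^\pi \in \R^\A$, obtained as a signed sum of the canonical basis vectors $e_\alpha$ indexed by the letters whose endpoints participate in the cycle. One checks that each $v_c^\pi$ lies in $\ker(\Omega_\pi)$, and a dimension count together with the global relation $\sum_c v_c^\pi = 0$ shows that the family $\{v_c^\pi\}_c$ spans it. A single Rauzy-Veech move is a cut-and-paste on the suspension surface that neither creates nor destroys marked points, only relabels the local combinatorial data; algebraically this gives a canonical bijection $c \mapsto c'$ between the cycles of $\sigma_\pi$ and those of $\sigma_{\pi'}$, and a direct case-by-case computation (top and bottom moves separately) verifies the relation $A(T)^{-1} v_c^\pi = v_{c'}^{\pi'}$. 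This is the statement that the action is the identity in the chosen bases.

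The main obstacle is step (b): making the combinatorial definition of $\sigma_\pi$ and of the vectors $v_c^\pi$ explicit enough to carry out the verification. Conceptually the triviality of the action is forced by topology --- the marked points, and the homology classes of small loops around them, are combinatorial invariants of $\pi$ under Rauzy-Veech moves --- but the bookkeeping is not entirely painless, and I would follow the zippered-rectangle setup of Yoccoz \cite{yoccoz_interval_2010} for the cleanest formulation of the necessary identities.
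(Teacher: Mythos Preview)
The paper does not actually prove this proposition; it simply refers to \cite[Proposition 7.6]{yoccoz_interval_2010}. Your outline is correct and is essentially the argument given in that reference: the symplectic relation $A(T)^t\,\Omega_\pi\,A(T)=\Omega_{\pi'}$ for the invariance of the kernel, and the cycle vectors of the interval permutation $\sigma_\pi$ (the homology classes of loops around the singularities of the suspension surface) as the canonical basis on which the cocycle acts by the identity.
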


\subsubsection{The heights cocycle}
\label{sc: heights_cocycle}

Let $T = (\lambda, \pi)$ verifying Keane's condition. Using the matrix $A$ in (\ref{eq: RVmatrix}), we define a cocycle over $\cR$, known as the \emph{heigths cocycle}, by 
 \[\Function{A^{T}}{\Dom}{SL(d, \Z)}{T}{A(T)^{T}}.\]
 This cocycle allows us to describe the return times, or \emph{heights} if we think of the induced map as a system of Rohlin towers associated with the iterates of $\cR$. Indeed, given $n \in \N$, the transformation $\cR^n(T)$ is defined as the linear rescaling of the first return map of $T$ to some subinterval $I^n \subset I.$ Moreover, this interval admits a decomposition $I = \bigsqcup_{\alpha \in \A} I^n_\alpha$ such that the return time to $I^n$ on each subinterval $I^n_\alpha$ is constant. Then the vector of return times to $I^n$ is given by
\[h^n = A^n(T)^T \overline{1},\]
where $\overline{1} \in \N^\A$ is the vector whose entries are all equal to $1$.

\subsubsection{Affine IETs}

An \emph{affine interval exchange transformation} (AIET) is a GIET for which the restriction to each subinterval of continuity is a linear map. As for IETs, given an AIET $f$, we can decompose the interval $I$ into intervals of continuity of $f$, which we denote by $\{ I_\alpha\}_{\alpha \in \A}$, where $\A$ is a finite alphabet. Notice that $f$ will not only change the order of these intervals but will also linearly modify its lengths. If the permutation associated with $f$ is in the Rauzy class of rotations, we say that $f$ is of \emph{rotation type}.

Given an AIET $f$ on $d$ intervals with associated partition $\{ I_\alpha\}_{\alpha \in \A},$ we define its \emph{log-slope} as the logarithm of the slope of $f$ in each interval of continuity, namely, the vector $\omega = (\log Df \mid_{I_\alpha})_{\alpha \in \A} \in \R^\A$. 

The following relation between the log-slope of Rauzy-Veech renormalizations of an AIET and the heights cocycle will be of fundamental importance. 

\begin{proposition}
\label{prop: log_slope}
Let $f$ be an irrational AIET on $d$ intervals with combinatorial rotation number $\gamma(f) = (\lambda, \pi) \in \Delta_d \times \mathfrak{G}^0_d$ and log-slope $\omega \in \R^\A$. Then the log-slope of $\cR^n(f)$ is given by $B^n(\lambda, \pi)^T \omega,$ for any $n \geq 0.$
\end{proposition}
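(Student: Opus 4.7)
The plan is to prove this by induction on $n$, the content of the argument being entirely concentrated in a one-step verification; the general case then follows from the cocycle identity $A^{n+1}(T) = A(T)\cdot A^{n}(\cR(T))$, which transposes to $A^{n+1}(T)^T = A^{n}(\cR(T))^T\cdot A(T)^T$, so that iterating the one-step rule $\omega \mapsto A(T)^T\omega$ produces $A^{n}(T)^T\omega$ as required. (I read the $B^n$ in the statement as the heights cocycle $A^n(\cdot)^T$ from Section \ref{sc: heights_cocycle}.) The base case $n=0$ is trivial, since $A^0$ is the identity.

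For the one-step computation, I would treat the two Rauzy-Veech types in parallel. Recall that $A(T) = I_{d\times d} + E_{\alpha_\epsilon,\alpha_{1-\epsilon}}$, so $A(T)^T$ acts on $\omega$ by leaving every coordinate unchanged except the loser coordinate, which becomes $\omega_{\alpha_{1-\epsilon}}+\omega_{\alpha_{\epsilon}}$. Thus the task reduces to showing that, in the new partition of continuity intervals of $\cR(f)$, the log-slope on every non-loser letter equals the original $\omega_\alpha$, while the log-slope on the loser letter $\alpha_{1-\epsilon}$ equals $\omega_{\alpha_{1-\epsilon}}+\omega_{\alpha_{\epsilon}}$.

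The geometric input is the following description of the first return map to $\cI{I}$. In the top case ($\lambda_{\alpha_t}>\lambda_{\alpha_b}$), every point of $I_\alpha$ with $\alpha\neq \alpha_b$ returns to $\cI{I}$ in one step, so the branch of $\cI{T}$ labeled by $\alpha$ is just a restriction of $f|_{I_\alpha}$ and its slope is $e^{\omega_\alpha}$. On $I_{\alpha_b}$ the image $f(I_{\alpha_b})$ sits inside $I_{\alpha_t}$ (because $f(I_{\alpha_b})$ is the rightmost interval of the bottom partition and $\lambda_{\alpha_t}>\lambda_{\alpha_b}$), so the return requires a second application of $f$, now restricted to $I_{\alpha_t}$; by linearity the resulting branch has slope $e^{\omega_{\alpha_b}}\cdot e^{\omega_{\alpha_t}}$, hence log-slope $\omega_{\alpha_b}+\omega_{\alpha_t}$. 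The bottom case is symmetric: the rightmost sub-piece of $I_{\alpha_b}$, of length $\lambda_{\alpha_t}$, is the unique region needing two iterates, and its branch is the composition $f|_{I_{\alpha_t}}\circ f|_{I_{\alpha_b}}$, which in the new combinatorics inherits the loser label $\alpha_t$; all other branches are single applications of $f$. In both cases the transformation of the log-slope vector is exactly $\omega \mapsto A(T)^T\omega$.

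It remains to observe that the linear rescaling of $\cI{T}$ to $\cR(T)$ does not affect log-slopes, since rescaling the domain and the codomain by the same positive factor preserves the slope of any linear branch. The only point in the argument that requires care is the bookkeeping of which sub-piece in the new partition acquires the loser label $\alpha_{1-\epsilon}$ (so that the composed-slope contribution falls in the right coordinate); this is purely combinatorial, matches the definition of $A(T)$, and is really the heart of why the heights cocycle controls the log-slopes in exactly the same way it controls the return times described in Section \ref{sc: heights_cocycle}.
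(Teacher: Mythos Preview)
Your argument is correct. The paper in fact states this proposition without proof, treating it as a standard fact about the Rauzy--Veech cocycle for AIETs, so there is no comparison of approaches to make; your induction-plus-one-step computation is exactly the elementary verification one would expect, and the bookkeeping of which branch inherits the loser label is handled correctly in both the top and bottom cases.

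Two minor remarks. First, your parenthetical reading of $B^n$ as the heights cocycle $A^n(\cdot)^T$ is the right interpretation here; the paper's own notation is slightly inconsistent at this point (the statement uses $\cR^n$ with $B^n$, whereas elsewhere $B^n$ is reserved for the Zorich-accelerated product), and the claim and its proof are unaffected by which acceleration one works with. Second, in the bottom case you write that the rightmost sub-piece of $I_{\alpha_b}$ has ``length $\lambda_{\alpha_t}$''; for a genuine AIET this piece is $f^{-1}(I_{\alpha_t})\cap I_{\alpha_b}$ and has length $|I_{\alpha_t}|e^{-\omega_{\alpha_b}}$, not $|I_{\alpha_t}|$. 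This is harmless for the proof, since the log-slope of the composed branch depends only on which $f|_{I_\gamma}$'s are composed and not on the lengths involved, but it is worth stating precisely.
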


\subsubsection{Zorich acceleration}
\label{sc: Zorich_map}

A. Zorich \cite{zorich_finite_1996} showed that one can `accelerate the dynamics' of $\cR$ to define a map $\cZ: \Dom \to \Dom$ admitting a unique ergodic invariant probability measure $\mu_{\cZ}$ which is absolutely continuous with respect to the $\textup{Leb} \times d\pi$ and whose density is a rational function on $\Delta_d$ uniformly bounded away from the boundary of $\Delta_d$. This map is given by
\[\cZ(T) = \cR^{z(T)}(T),\]
 where $z(T)$ is the smallest $n > 0$ such that $\cR^{n - 1}(T)$ and $\cR^{n}(T)$ have different type. 

Similarly, using $z: \Dom \to \N$ as the \emph{accelerating map}, we define the \emph{accelerated lengths and heights cocycles} 
 \[B^{-1}: \Dom \to SL(d, \Z), \hskip0.5cm B^{T}: \Dom \to SL(d, \Z),\]
by setting
 \[ B^{-1}(T) = A^{z(T)}(T)^{-1}, \hskip0.5cm B^{T}(T) = A^{z(T)}(T)^{T}. \]
As before, these cocycles are related to the transformation of lengths and heights under the action of $\cZ.$ Moreover, these cocycles will be integrable with respect to the invariant probability measure $\mu_{\cZ}$.

\subsubsection{Notations for iterates}
\label{sc: notations}

In the following, given $T= (\lambda, \pi) \in \Dom$, we denote its orbit under $\cZ$ by
\[ (\lambda^n, \pi^n) = \cZ^n(T).\]
We denote the type of $\cZ^n(T)$ by $\epsilon^n$ and the letters associated with its winner and loser intervals by $\alpha^n$ and $\beta^n$, respectively. We denote by $I^n$ the subinterval $I^n \subset I$, given by the Rauzy-Veech algorithm, such that $\cZ^n(T)$ coincides with the linear rescaling of $T$ when induced to $I^n$. We denote its associated decomposition by $\{ I_\alpha^n\}_{\alpha \in \A}.$ We denote by $h^n_\alpha$ the return time of $T$ to $I^n$ for any $x \in I^n_\alpha$. We define
\[ B^n(T) = B(T) \dots B({\cZ^n(T)}),\]
for any $n\geq 0$. 
Then, by definition of the accelerated lengths and heights cocycles, we have
\[ \lambda^n = B^n(T)^{-1}\lambda^0, \hskip0.5cm h^n = B^n(T)^Th^0,\]
where $h^0 = (1, \dots, 1) \in \N^\A$.

\subsubsection{Oseledet's splitting}
\label{sc: oseledets}

By Oseledet's theorem and the combination of several classical works  \cite{veech_gauss_1982, zorich_finite_1996, forni_deviation_2002, avila_simplicity_2007}, for $\mu_Z$-a.e. $(\lambda, \pi)$, there exist decompositions
\[ \R^{|\A|} = E_{d} \supsetneq \dots \supsetneq E_{d - g} = \dots = E_{g + 1} \supsetneq \dots \supsetneq E_0 = \{0\},\]
\[ \R^{|\A|} = F_{d} \supsetneq \dots \supsetneq F_{d - g} = \dots = F_{g + 1} \supsetneq \dots \supsetneq F_0 = \{0\},\]
invariant by the cocycles $(\cZ, B^T)$ and$(\cZ, B^{-1})$ respectively, corresponding to Lyapunov exponents 
\[\theta_g > \dots > \theta_1 > \theta_0 = 0 > -\theta_1 > \dots > -\theta_g.\]
We denote 
\[E^u = \R^{|\A|} = F^u, \hskip0.5cm E^{cs} = E_{2d - g}, \hskip0.5cm E^s = E_g, \hskip0.5cm F^{cs} = F_{2d - g}, \hskip0.5cm F^s = F_g.\]
 We have
\[ E^u \supset E^{cs} \supset E^s, \hskip1cm F^u \supset F^{cs} \supset F^s.\]
For a.e. $(\lambda, \pi) \in X_d$, the following holds.
\begin{itemize}
\item $ \lambda \in F^s(\lambda, \pi).$
\item $d - 2g = \dim(\textup{Ker}(\Omega_\pi))$ and $\textup{Ker}(\Omega_\pi) \subset F^{cs}(\lambda, \pi) \setminus F^s(\lambda, \pi).$
\item $E^s = (F^{cs})^\bot,$ and $F^s = (E^{cs})^\bot.$
\end{itemize}
 If $\pi \in \mathfrak{G}^0_d$ is of rotation type, then $\dim(\textup{Ker}(\Omega_\pi)) = d - 2.$ Hence, it follows form the previous relations that
 \[ \dim(E^{u}) = \dim(E^{s}) = 1 = \dim(F^{s}) = \dim(F^{u}),\]
 \[ F^s(\lambda, \pi) = \langle \lambda \rangle, \quad\quad E^{cs} = \orth{\lambda}, \quad\quad E^s = \textup{Ker}(\Omega_\pi)^\bot \cap \orth{\lambda}.\]

\subsection{P-homeomorphisms as GIETs}
\label{sc: P-homeomorphisms}

Recall that a homeomorphism of the circle $f: \T \to \T$ is called a \emph{P-homeomorphism} if it is a smooth orientation preserving homeomorphism, differentiable away from countable many points, so-called \emph{break-points}, at which left and right derivatives, denoted by $Df_-$, $Df_+$ respectively, exist but do not coincide, and such that $Df$ (which is defined away from break points) coincides with a function uniformly bounded from below and of bounded variation. A P-homeomorphism that is linear in each domain of differentiability is called \emph{PL-homeomorphism}. We denote the set of break points of a P-homeomorphism $f$ by 
\[BP(f) = \{x \in \T \mid Df_-(x) \neq Df_+(x) \}.\]
Since we often require P-homeomorphisms to have additional properties, we introduce the following notation. Define
\[ \Function{\varphi}{[0, 1)}{\T}{x}{e^{2\pi i x}}.\]
For any $d \geq 1$ and any $r \in [0, +\infty)$, let $P_d^r(\T)$ (resp. $PL_d(\T)$) be the space P-homeomorphisms (resp. PL-homeomorphisms) $f: \T \to \T$ such that:
	
\begin{enumerate}
\item $f$ is piecewise $C^r$ (resp. linear),
\item $\varphi(0) \in BP(f)$,
\item $|BP(f)| = d$,
\item $\rho(f) \in \R \setminus \Q$,
\item $f^n(x) \neq f^m(y)$ for any $n, m \in \Z$ and any $x, y \in BP(f)$, $x \neq y$.
\end{enumerate}
We treat P-homeomorphisms (resp. PL-homeomorphisms) as GIETs (resp. AIETs) using the circle parametrization given by the map $\varphi$. For any $f \in P^r_d(\T)$, the map
\[T_f = \varphi^{-1} \circ f \circ \varphi\]
is a well-defined GIET (resp. AIET) on $d + 1$ intervals. Since $f$ has exactly $d$ break-points lying in different orbits, $T_f$ defines an irrational GIET (resp. AIET). Moreover, $T_f$ cannot be seen as a GIET (resp. AIET) on a smaller number of intervals. In the following, for any $f \in P^r_d(\T)$ we define its \emph{combinatorial rotation number} as $\gamma(f) = \gamma(T_f)$. 

By Denjoy's theorem, a P-homeomorphism with irrational rotation number is topologically conjugated to a rigid rotation. In particular, given $f \in P^r_d(\T)$, the associated GIET $T_f$ has no wandering intervals. Hence, if $\gamma(f) = (\lambda, \pi) \in \Delta_d \times \mathfrak{G}^0_d$, then $T_f$ is topologically conjugated to the unique IET $T$ associated to $(\lambda, \pi).$

\begin{proposition}
Let $d \geq 1$ and $r \in [0, + \infty)$. Let $f \in P^r_d(\T)$ with rotation number $\alpha = \rho(f) \in \R \setminus \Q$ and combinatorial rotation number $\gamma(f) \in \Delta_d \times \mathfrak{G}^0_d.$ Then 
\begin{enumerate}
\item $f$ is topologically conjugated to $R_\alpha$ (as circle maps).
\item $T_f$ is topologically conjugated to $T= (\lambda, \pi)$ (as GIETs).
\end{enumerate}

\end{proposition}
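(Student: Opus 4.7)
The plan is to derive both parts from two classical results invoked or recalled earlier in the exposition: the extension of Denjoy's theorem to P-homeomorphisms for part (1), and the semi-conjugacy/conjugacy theorem for irrational GIETs recalled in Section \ref{sc: combinatorial_rotation_number} for part (2). Essentially, the proposition amounts to checking that the P-homeomorphism framework fits into the classical Denjoy picture and then translating the resulting conjugacy through the parametrization $\varphi$.

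For part (1), I would first observe that any $f \in P^r_d(\T)$ has derivative $Df$ uniformly bounded from below by a positive constant (this follows from $f$ being an orientation-preserving homeomorphism together with the standing assumption that $Df$ is uniformly bounded from below) and of bounded variation. Composing with the logarithm, which is Lipschitz on any compact subset of $(0,+\infty)$, shows that $\log Df$ also has bounded variation. This is precisely the hypothesis for the classical form of Denjoy's theorem, whose conclusion---the absence of wandering intervals---is already mentioned in the introduction. Combined with the irrationality of $\rho(f) = \alpha$, Poincaré's classification theorem (Section 2.1.1) produces a homeomorphism $h:\T\to\T$ with $h \circ f = R_\alpha \circ h$.

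For part (2), I would first transfer the no-wandering-intervals property from $f$ to $T_f$. The parametrization $\varphi$ restricts to a homeomorphism between $(0,1)$ and $\T \setminus \{\varphi(0)\}$; any open wandering interval for $T_f$ contained in $(0,1)$ would push forward to a wandering arc for $f$, contradicting part (1), while a wandering interval touching the endpoint of $[0,1)$ has an open sub-interval lying in $(0,1)$ that would still be wandering. Hence $T_f$ admits no wandering intervals. By hypothesis, $T_f$ is an irrational GIET with combinatorial rotation number $\gamma(f) = (\lambda, \pi)$, so the general result recalled in Section \ref{sc: combinatorial_rotation_number} (see \cite{yoccoz_echanges_2005}) provides a semi-conjugacy between $T_f$ and the unique IET $T$ associated to $(\lambda, \pi)$, which upgrades to a genuine conjugacy precisely because $T_f$ has no wandering intervals.

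No serious obstacle is expected, since the statement just packages classical material in the GIET language. The only mildly delicate point is the equivalence between the regularity condition built into the definition of $P^r_d(\T)$ and the standard hypothesis ``$\log Df$ of bounded variation'' appearing in the classical Denjoy theorem; this is immediate once one notes that $Df$ stays uniformly bounded away from both $0$ and $+\infty$ on $\T$.
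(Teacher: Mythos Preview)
Your proposal is correct and follows exactly the route the paper takes: the paper does not give a formal proof but simply records, in the paragraph immediately preceding the proposition, that (1) follows from Denjoy's theorem for P-homeomorphisms and that (2) then follows because $T_f$ inherits the absence of wandering intervals and the semi-conjugacy result of Section~\ref{sc: combinatorial_rotation_number} upgrades to a conjugacy. Your write-up merely fills in the routine verifications (bounded variation of $\log Df$, passage of wandering intervals through $\varphi$) that the paper leaves implicit.
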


\section{Statement of the main result}
\label{sc: results}

In this work, we aim to complement the results of K. Khanin and S. Kocić \cite{khanin_hausdorff_2017}, which concerns P-homeomorphisms with a finite number of breaks and nonzero mean nonlinearity, by considering the zero mean nonlinearity case. We will show that generically, the unique invariant probability measure of P-homeomorphisms with a finite number of breaks, irrational rotation number and zero mean nonlinearity has zero Hausdorff dimension. To encode this generic condition, we consider P-homeomorphisms as generalized interval exchange transformations (GIETs) of the interval and rely on the notion of \emph{combinatorial rotation number}, which can be seen as an extension of the classical notion of rotation number for circle homeomorphisms to the GIET setting.

Our main result is the following.

\begin{theorem}
\label{thm: HD_multibreak}
Let $d \geq 2$. There exists a full-measure set of combinatorial rotation numbers $\mathcal{C}_d \subset \Delta_{d + 1} \times \mathfrak{G}_{d + 1}^0$ such that, 
for any $f \in P_d^3(\T)$ with zero mean nonlinearity and $\gamma(f) \in \mathcal{C}_d$, the unique invariant probability measure $\mu_f$ of $f$ verifies $\textup{dim}_H(\mu_f) = 0.$
\end{theorem}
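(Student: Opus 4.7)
\textbf{Reduction and choice of $\mathcal{C}_d$.} By the Berk--Trujillo rigidity theorem, a generic $C^3$ P-homeomorphism with zero mean nonlinearity is smoothly conjugate to a PL-homeomorphism, and smooth conjugacies preserve the Hausdorff dimension of measures; so we may assume $f \in PL_d(\T)$. Identifying $f$ via $\varphi$ with an AIET $T_f$ of rotation type on $d+1$ intervals with log-slope vector $\omega \in \R^{d+1}$, we define $\mathcal{C}_d$ as the full-measure subset of $\Delta_{d+1} \times \mathfrak{G}_{d+1}^0$ of rotation type on which the Oseledets theorem holds for both accelerated cocycles $(\cZ, B^T)$ and $(\cZ, B^{-1})$ with strictly positive top exponent $\theta_1$ (Masur--Veech--Zorich--Forni), together with the effective ergodic estimates for the Birkhoff sums of cocycle observables used below.

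\textbf{Growth of renormalized log-slopes.} By Proposition \ref{prop: log_slope}, the log-slope of $\cZ^n(T_f)$ equals $\omega^n := B^n(\lambda, \pi)^T \omega$. The key analytical claim is that for $\gamma(f) \in \mathcal{C}_d$ and any nonzero $\omega$, the sequence $\|\omega^n\|$ becomes large along a quantitative subsequence $n_k$. Decompose $\omega = \omega_u + \omega_c + \omega_s$ along the Oseledets splitting $E^u \oplus E^c \oplus E^s$ of $B^T$; for rotation type $\dim E^u = \dim E^s = 1$ and $\dim E^c = d-1$. If $\omega_u \neq 0$, this is immediate from $\theta_1 > 0$. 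When $\omega \in E^c \oplus E^s$, the plan is to use recurrence of $\cZ$ together with Proposition \ref{prop: trivial_action} and the identification $E^{cs} = \orth{\lambda}$, $E^s = \textup{Ker}(\Omega_\pi)^\bot \cap \orth{\lambda}$ from Section \ref{sc: oseledets}, to show that $B^n$ acts non-trivially on $\omega$ along a sufficiently dense sequence of times.

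\textbf{From slope growth to $\dim_H = 0$.} The dynamical partition at renormalization level $n$ writes $I$ as a disjoint union of Rohlin towers with base $I^n_\alpha$ and height $h^n_\alpha$. Via the Denjoy semi-conjugacy, each floor $f^i(I^n_\alpha)$ has $\mu_f$-measure equal to the IET length $\lambda^n_\alpha$, while its Lebesgue size is $Df^i|_{I^n_\alpha} \cdot |I^n_\alpha|_{\textup{Leb}}$; for a PL map, $Df^i|_{I^n_\alpha}$ is a product of slopes along the orbit, so that $\log Df^{h^n_\alpha}|_{I^n_\alpha} = \omega^n_\alpha$ and more generally the log-Lebesgue sizes of the floors are encoded by the partial Birkhoff sums of $\omega$. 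The growth of $\|\omega^{n_k}\|$ from the previous step forces the Lebesgue size of typical floors to decay much faster than their $\mu_f$-measure; explicitly, for $\mu_f$-a.e.\ $x$ one obtains radii $r_k \to 0$ with $\log \mu_f(B_{r_k}(x)) / \log r_k \to 0$, giving $\dim_H(\mu_f) = 0$ via the standard local-density characterization of the Hausdorff dimension of a measure.

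\textbf{Main obstacle.} The hardest case is $\omega \in E^c$, where $B^T$ has zero Lyapunov exponent so $\|\omega^n\|$ grows only subexponentially, while $h^n \sim e^{n\theta_1}$. The naive bound $|\omega^n| / h^n \to 0$ is then unhelpful for distortion. One must combine the recurrence properties of the Zorich cocycle, the identities in Section \ref{sc: oseledets} separating $E^c$ from $\textup{Ker}(\Omega_\pi)$ (via Proposition \ref{prop: trivial_action}), and a careful analysis of return times to show that, along a sufficiently dense subsequence $n_k$, $|\omega^{n_k}|$ is at least (for instance) polylogarithmic in $h^{n_k}$. This is precisely the new input needed beyond the Khanin--Kocić setting, where nonzero mean nonlinearity supplied the exponential growth for free.
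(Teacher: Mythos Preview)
Your reduction via Berk--Trujillo is exactly what the paper does. After that, however, your analytic mechanism diverges from the paper's and contains a genuine gap.

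You try to push a local-density argument driven by \emph{growth} of $\|\omega^n\|$. But in the decisive case $\omega\in E^{cs}\setminus E^s$ (which, by Cobo's result and the hypothesis $|BP(f)|=d\ge 2$, is the only case that actually occurs), there is no growth to exploit: using Proposition~\ref{prop: trivial_action} and duality one gets $\pi_{\textup{Ker}(\Omega_\pi)}(\omega^{(n)})=\pi_{\textup{Ker}(\Omega_\pi)}(\omega)\neq 0$ whenever $\pi^{(n)}=\pi$, which yields only a \emph{uniform lower bound} $\max_{\alpha\neq\beta^*}|\omega^{(n)}_\alpha|\ge c(\omega)>0$ along a subsequence, not polylogarithmic growth in $h^{n}$. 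So the quantity you single out as the ``main obstacle'' is not merely hard --- the statement you aim for there is false in general, and your step~3 (``growth forces Lebesgue size of floors to decay much faster than their $\mu_f$-measure'') does not go through as written.

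The paper's route is different and supplies the missing idea. It proves an abstract criterion (Proposition~\ref{prop: criterion}): if one can find Rohlin towers $\mathcal{F}_n$ with (i) $\inf_n\mu(\mathcal{F}_n)>0$, (ii) $|DT^{h_n}|_{F_n}-1|$ uniformly bounded away from $0$, and (iii) $\bigcap_{k=0}^{M_n}T^{kh_n}(F_n)\neq\emptyset$ with $M_n/\log h_n\to\infty$, then $\dim_H(\mu)=0$. Condition~(ii) needs only the constant lower bound on $|\omega^{(n_k)}_\alpha|$ above (Corollary~\ref{cor: central_bound_below}), not growth. The real work is condition~(iii): the paper obtains it by a Borel--Cantelli lemma for the Zorich map (Lemma~\ref{lemma: generic_condition}), producing times $n_k$ at which $\pi^{(n_k)}=\pi^*$ and $\lambda^{(n_k)}_{\beta^*}$ is smaller than the other lengths by a factor $n_kC(n_k)$ with $\sum 1/(nC(n))=\infty$; this forces $\gtrsim n_kC(n_k)$ consecutive returns of $I^{(n_k)}_{\beta^*}$ inside each $I^{(n_k)}_\alpha$, giving $M_{n_k}/\log h^{(n_k)}\to\infty$. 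Your sketch has no analogue of this rigidity input, and without it neither your local-density argument nor any variant relying solely on the size of $\omega^{(n)}$ can conclude.
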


Let us point out that the nonzero mean nonlinearity hypothesis plays an essential role in the argument of \cite{khanin_hausdorff_2017} (which proves a similar result in the nonzero mean nonlinearity case) as the proof relies heavily on the behavior of renormalizations of P-homeomorphisms with a finite number of break points and nonzero mean nonlinearity. In fact, for a given map $f$ in this class, its renormalizations converge, in the $C^2$ norm, to a class of Möbius transformations whose second derivative is negative and \emph{uniformly} bounded away from zero. Exploiting this convergence, the authors show that the union of adjacent intervals in the RHS of (\ref{eq: refining_property}) accumulates `geometrically' near the boundary of $I^i_{n - 1} \setminus I^i_{n + 1}$, that is, their lengths decrease geometrically with respect to the length of $I^i_{n - 1}$.

Since all of the intervals in the RHS of (\ref{eq: refining_property}) have the same measure with respect to the unique invariant measure $\mu_f$ of $f$, the observation above allows to construct sets (more precisely Rohlin towers) with small Lebesgue measure (in fact small Hausdorff content) but whose measure with respect to $\mu_f$ tend to $1$. Refining this argument, the authors in \cite{khanin_hausdorff_2017} show that if an appropriate full-measure condition in the rotation number is satisfied, then the Hausdorff dimension of the unique invariant measure $\mu_f$ is zero. 

However, for circle diffeomorphisms with breaks and \emph{zero mean nonlinearity}, the renormalizations exhibit very different behavior. In fact, for simple examples in this class, such as piecewise affine circle homeomorphisms, the second derivative of any of their renormalizations is equal to $0$ everywhere. Furthermore, it follows from a recent result by S. Ghazouani and C. Ulcigrai \cite{ghazouani_priori_2021} that the renormalizations of any circle diffeomorphism with breaks of class $C^{2 + \epsilon}$ and zero mean nonlinearity converges, in $C^2$ norm, to the space of piecewise affine circle homeomorphisms. In particular, the second derivative of their renormalizations converges to $0$, and thus the argument in \cite{khanin_hausdorff_2017} cannot be extended to the zero mean nonlinearity case. 

As mentioned in the introduction, it follows from a recent work by P. Berk and the author \cite{berk_rigidity_2022}, zero mean nonlinearity P-homeomorphism of class $C^3$ without periodic points is $C^1$ conjugated to a PL- homeomorphism. Hence, Theorem \ref{thm: HD_multibreak} is a direct consequence of the following.

\begin{theorem}
\label{thm: HD_AIET}
Let $d \geq 2$. For a.e. $(\lambda, \pi) \in \Delta_d \times \mathfrak{G}_d^0$ of rotation type, and for any AIET $f$ on $d$ intervals with log-slope $\omega \in \R^d$ and $\gamma(f) = (\lambda, \pi)$, we have the following dichotomy, either
\begin{enumerate}
\item $f$ is $C^\infty$ conjugate to a standard IET, or
\item $\textup{dim}_H(\mu) = 0,$ where $\mu$ denotes the unique invariant probability measure $\mu_f$ for $f$.
\end{enumerate}
Moreover, the first condition is only verified if and only if $f$ can be seen as an AIET on two intervals, which corresponds to $\omega \in E^s(\lambda, \pi)$.
\end{theorem}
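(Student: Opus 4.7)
The proof is organized around the Oseledets decomposition of the Zorich cocycle on $\R^\A$: by Section \ref{sc: oseledets}, for $\mu_\cZ$-a.e.\ rotation-type $(\lambda, \pi)$ one has $\dim E^s(\lambda, \pi) = 1$ together with the explicit description $E^s = \textup{Ker}(\Omega_\pi)^\bot \cap \orth{\lambda}$. By Proposition \ref{prop: log_slope} the log-slope of the $n$-th renormalization is $\omega^n = B^n(\lambda, \pi)^T\omega$, so the dichotomy reduces to whether $\|\omega^n\|$ decays exponentially (iff $\omega \in E^s$) or not.

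For $\omega \in E^s$, I plan to establish the joint equivalence ($C^\infty$-conjugacy to a standard IET / $2$-interval AIET presentation) in two steps. The algebraic content is an inspection of the $1$-dimensional slice $E^s = \textup{Ker}(\Omega_\pi)^\bot \cap \orth{\lambda}$ inside the space of admissible log-slopes: this slice is computed directly and matched with the family of log-slopes obtained by merging consecutive intervals of $f$ into two blocks compatible with $\pi$. The dynamical content uses the exponential decay of $\omega^n$, which forces the renormalizations $\cR^n(f)$ to converge in $C^\infty$ to standard IETs; from this convergence one builds a smooth conjugacy $h$ to a target IET $T_0$ with $\gamma(T_0) = (\lambda, \pi)$ by comparing corresponding floors of the dynamical partitions of $f$ and $T_0$ level by level, with the $C^k$-control on $h$ supplied by the geometric decay of $\omega^n$.

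For $\omega \notin E^s$, the plan is to prove $\dim_H(\mu_f) = 0$ by exhibiting $\mu_f$-typical points with arbitrarily small lower local dimension. The key input is the Rohlin tower at the $n$-th Zorich level: the floors $f^k(I^n_\alpha(f))$ with $\alpha \in \A$ and $0 \leq k < h^n_\alpha$ tile $[0,1)$ and share the $\mu_f$-mass $\mu_f(I^n_\alpha(f))$ (by $f$-invariance), while the Lebesgue length of the $k$-th floor is $|I^n_\alpha(f)|\exp(\sigma^n_{\alpha, k})$, where $\sigma^n_{\alpha, k} := \sum_{j=0}^{k-1}\omega_{\alpha(j)}$ is the partial Birkhoff sum of $\omega$ along the orbit of a point in $I^n_\alpha(f)$. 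The log-slope cocycle gives $\sigma^n_{\alpha, 0} = 0$ and $\sigma^n_{\alpha, h^n_\alpha} = \omega^n_\alpha$; since $\|\omega^n\|$ does not tend to zero, the walk $k \mapsto \sigma^n_{\alpha, k}$ must attain very negative values on a positive-$\mu_f$ portion of each deep tower, producing floors whose Lebesgue length is exponentially smaller than their $\mu_f$-mass raised to any fixed power $s > 0$. A Borel--Cantelli style argument, using the ergodicity of $\cZ$, will then give, for $\mu_f$-a.e.\ $x$, arbitrarily small radii $r$ with $\mu_f(B(x, r)) \geq r^s$, so $\dim_H(\mu_f) \leq s$; letting $s \to 0$ concludes the case.

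The hardest sub-case is $\omega \in E^{cs}\setminus E^s$, where $\omega^n$ has zero Lyapunov exponent and stays merely bounded: the large-deviation fluctuations of $\sigma^n_{\alpha, k}$ demanded above do not follow from growth of $\omega^n_\alpha$ alone, and must instead be extracted from the CLT-scale fluctuations of the walk $k \mapsto \sigma^n_{\alpha, k}$ along towers of exponentially growing height $h^n_\alpha \sim e^{\theta_g n}$. Producing the right pointwise large-deviation statement, uniformly in $\omega$ and in the rotation number $(\lambda, \pi)$, and then transferring it to the floors containing $\mu_f$-typical points, is what I expect to be the technical heart of the argument.
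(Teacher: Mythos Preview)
Your plan for the case $\omega \in E^{cs}\setminus E^s$ has a genuine gap, and it is precisely the step you flag as hardest. The ``CLT--scale fluctuation'' picture for the walk $k\mapsto \sigma^n_{\alpha,k}$ is not available here: since $\pi$ is of rotation type, the underlying IET $T$ is a circle rotation, and $x\mapsto \omega_{\alpha(x)}$ is a step function of bounded variation with zero mean (because $\omega\in E^{cs}=\lambda^\perp$). Denjoy--Koksma gives $|\sigma^n_{\alpha,h^n_\alpha}|\le \mathrm{Var}(\omega)$ at the special return times, and more generally the growth of $\max_{0\le k\le N}|S_k\omega|$ is at most logarithmic in $N$ for a.e.\ rotation number, not of order $\sqrt{N}$. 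Feeding a $\log h^n_\alpha$--sized oscillation into your local--dimension computation yields at best $\dim_H(\mu_f)\le \theta_g/(\theta_g+c)<1$ for some $c>0$, not $\dim_H(\mu_f)=0$. There is no CLT input to rescue this: the mechanism that produces zero Hausdorff dimension in the paper is \emph{not} a large--deviation phenomenon for $\sigma^n_{\alpha,k}$.

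The paper's argument is structurally different. First, it records (quoting Cobo) that for a.e.\ $(\lambda,\pi)$ every AIET with $\gamma(f)=(\lambda,\pi)$ already satisfies $\omega\in E^{cs}$, and that $\omega\in E^s$ forces $C^\infty$ conjugacy to an IET; combined with~(\ref{eq: kernel_orth})--(\ref{eq: cs_s}) this immediately gives the equivalence with the $2$--interval presentation, so your direct construction of the conjugacy is unnecessary. For $\omega\in E^{cs}\setminus E^s$, the proof runs through Proposition~\ref{prop: criterion}: one needs towers whose first--return map has slope uniformly bounded away from $1$ \emph{and} which are ``rigid'' in the sense that the base can absorb $M_n\gg \log h_n$ iterates of the return map. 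The slope bound comes from Lemma~\ref{lemma: kernel_projection}: since the lengths cocycle acts trivially on $\textup{Ker}(\Omega_\pi)$ (Proposition~\ref{prop: trivial_action}), the orthogonal projection $\pi_{\textup{Ker}(\Omega_\pi)}(\omega^{(n)})$ is \emph{constant} in $n$, and for $\omega\notin E^s=\textup{Ker}(\Omega_\pi)^\perp\cap\lambda^\perp$ this projection is nonzero; together with the explicit description~(\ref{eq: kernel}) this gives $\inf_k\max_{\alpha\neq\beta^*}|\omega^{(n_k)}_\alpha|>0$ (Corollary~\ref{cor: central_bound_below}). The rigidity is manufactured by a Borel--Cantelli argument for the Zorich map (Lemma~\ref{lemma: generic_condition}, via Theorem~\ref{thm: borel_cantelli}), which produces infinitely many times $n_k$ at which $\pi^{(n_k)}=\pi^*$, one interval $I^{(n_k)}_{\beta^*}$ is smaller than the others by a factor $n_kC(n_k)$, and the remaining lengths and heights are balanced. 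The zero Hausdorff dimension then comes from iterating the \emph{affine} return map (with slope $e^{\omega^{(n_k)}_\alpha}\neq 1$) roughly $n_kC(n_k)$ times inside the base, which creates geometric contraction; this is the content of the proof of Proposition~\ref{prop: criterion}. None of these three ingredients---the $\textup{Ker}(\Omega_\pi)$--projection invariance, the Borel--Cantelli step producing one tiny interval, and the iteration of the affine return map---appear in your outline, and the first two are exactly what replaces the unavailable CLT estimate.
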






Let us point out that the aforementioned smooth linearization results in \cite{berk_rigidity_2022} rely heavily on the results of  \cite{ghazouani_priori_2021} concerning the behavior of subsequent renormalizations of circle maps without periodic points and zero mean nonlinearity. 
Let us mention that the behaviour on the renormalization of piecewise smooth circle maps was previously known only for transformations with \emph{bounded combinatorics} \cite{cunha_renormalization_2013}, \cite{dzhalilov_renormalizations_2021}. We refer the interested reader to \cite{cunha_renormalization_2013} or \cite{dzhalilov_renormalizations_2021} for the definition of bounded combinatorics.

\subsection{Strategy of proof}

Extracting the main elements of the strategy in \cite{khanin_hausdorff_2017} described in the previous section and refining the argument therein yields to the following criterion: For a ergodic piecewise continuous orientation preserving transformation $(T, \mu)$ on an interval, the existence of a sequence of `sufficiently rigid' Rohlin towers $\mathcal{F}_n $ (see conditions (\ref{eq: growth}) and (\ref{eq: suff_rigid}) in Proposition \ref{prop: criterion}) with intervals $F_n$ as bases, increasing heights $h_n$, measure $\mu(\mathcal{F}_n)$ uniformly bounded from below, and such that $T^{h_n}\mid_{F_n}$ is continuous and either contracts or expands at a uniform rate for all Rohlin towers, implies $\dim_H(\mu) = 0.$

More precisely, we have the following.

\begin{proposition}
\label{prop: criterion}
Let $T: [0, 1) \to [0, 1)$ be a piecewise smooth orientation preserving bijection and let $\mu$ be a $T$-invariant ergodic probability measure. Suppose there exist a sequence of intervals $F_n \subset [0, 1)$ and an increasing sequence of natural numbers $h_n$ 
such that 
\begin{enumerate}
 \item \label{cond: tower} $\mathcal{F}_n = \bigsqcup_{k = 0}^{h_n - 1} T^k(F_n)$ is a Rohlin tower, for any $n \geq 0$.
 \item \label{cond: smoothness} $T^{h_n}\mid_{F_n}$ is smooth, for any $n \geq 0$.
 \item \label{cond: measure} $\inf_{n \geq 0} \mu(\mathcal{F}_n) > 0.$
 \item \label{cond: uniform_slope} $\inf_{\substack{x \in F_n \\ n \geq 0 }} \big| DT^{h_n}(x) - 1\big| > 0.$ 
 \item \label{cond: rigidity} There exists a sequence of natural numbers $M_n$ obeying 
 \begin{equation}
 \label{eq: growth}
 \frac{M_n}{\log h_n} \to \infty,
 \end{equation}
 such that 
 \begin{equation}
 \label{eq: suff_rigid}
 \bigcap_{k = 0}^{M_n}T^{kh_n}(F_n) \neq \emptyset.
 \end{equation}
\end{enumerate}
Then, $\dim_H(\mu) = 0.$
\end{proposition}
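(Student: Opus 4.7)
The strategy is to construct, inside each tower $\mathcal{F}_n$, a sub-Rohlin tower $\mathcal{J}_n$ with base $J_n^\ast \subset F_n$ of Lebesgue length at most $\lambda^{M_n}$ for some $\lambda \in (0,1)$ but with $\mu(\mathcal{J}_n) \geq c' > 0$; a Hausdorff-content estimate combined with ergodicity then forces $\dim_H(\mu) = 0$.

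By continuity of $DT^{h_n}$ on the connected interval $F_n$ and condition~(\ref{cond: uniform_slope}), the sign of $DT^{h_n}-1$ is constant on each $F_n$; passing to a subsequence we may assume this sign is the same for all $n$, so that either $DT^{h_n}|_{F_n}\geq 1+c$ (expanding case) or $DT^{h_n}|_{F_n}\leq 1-c$ (contracting case). I describe the contracting case; the expanding case is dual (using the contracting inverse branch of $T^{h_n}|_{F_n}$). Using~(\ref{eq: suff_rigid}), pick $y_n\in\bigcap_{k=0}^{M_n}T^{kh_n}(F_n)$ and set $w_k:=T^{-kh_n}(y_n)\in F_n$ for $k=0,\ldots,M_n$, so that the forward $T^{h_n}$-orbit of $w_{M_n}$ stays in $F_n$ for $M_n$ steps. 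Let $J_n\subset F_n$ be the maximal subinterval around $w_{M_n}$ whose $T^{h_n}$-orbit remains in $F_n$ for those $M_n$ steps. By the uniform contraction, $T^{M_n h_n}$ is smooth on $J_n$ with derivative at most $(1-c)^{M_n}$, so $J_n^\ast:=T^{M_n h_n}(J_n)\subset F_n$ satisfies $|J_n^\ast|\leq (1-c)^{M_n}|F_n|$ while $\mu(J_n^\ast)=\mu(J_n)$ by $T$-invariance. A pigeonhole argument on a cover of $F_n$ by intervals of length $(1-c)^{M_n}|F_n|$ produces, along a subsequence, such a $J_n^\ast$ satisfying in addition $\mu(J_n^\ast)\geq c'/h_n$.

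Form the sub-Rohlin tower $\mathcal{J}_n:=\bigsqcup_{k=0}^{h_n-1}T^k(J_n^\ast)\subset\mathcal{F}_n$; then $\mu(\mathcal{J}_n)=h_n\mu(J_n^\ast)\geq c'$. Invoking a bounded-distortion bound for the partial iterates $T^k|_{F_n}$ with $0\leq k<h_n$ — which, in the intended application, follows from the bounded variation of $\log DT$ and the fact that the tower orbit avoids the discontinuity locus of $T$ via a Denjoy-Koksma inequality — we get $|T^k J_n^\ast|\leq C\,|T^k F_n|\,|J_n^\ast|/|F_n|$ and hence $\max_k|T^k J_n^\ast|\leq C(1-c)^{M_n}$, so
\[
\mathcal{H}^s_\infty(\mathcal{J}_n)\;\leq\;\sum_{k=0}^{h_n-1}|T^k J_n^\ast|^s\;\leq\;h_n\bigl(C(1-c)^{M_n}\bigr)^s\;=\;C^s\exp\!\bigl(\log h_n-sM_n\log\tfrac{1}{1-c}\bigr),
\]
which by~(\ref{eq: growth}) tends to zero super-exponentially for every $s>0$. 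Along a further subsequence both $\sum_n\mathcal{H}^s_\infty(\mathcal{J}_n)$ and $\sum_n\mu(\mathcal{J}_n\triangle T^{-1}\mathcal{J}_n)\leq\sum_n 2\mu(J_n^\ast)$ are finite (the latter because $\mu(J_n^\ast)=O(1/h_n)$). Two Borel-Cantelli arguments then yield $\mathcal{H}^s(\limsup\mathcal{J}_n)=0$ for every $s>0$ and that $\limsup\mathcal{J}_n$ is $T$-invariant modulo $\mu$; combining this invariance with $\mu(\mathcal{J}_n)\geq c'$ and ergodicity of $T$ gives $\mu(\limsup\mathcal{J}_n)=1$, whence $\dim_H(\mu)\leq\dim_H(\limsup\mathcal{J}_n)=0$.

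The principal obstacle is establishing the bounded-distortion bound for the partial iterates $T^k|_{F_n}$, $0\leq k<h_n$, which is not an explicit hypothesis of the proposition — only the full return map $T^{h_n}|_{F_n}$ is assumed smooth. In the intended application the bounded variation of $\log DT$ combined with the tower structure supplies the estimate; in the abstract generality of the proposition this has to be extracted from the global regularity of $T$. A secondary technical point is securing the lower bound $\mu(J_n^\ast)\geq c'/h_n$ in the non-nested case, which is why the pigeonhole cover of $F_n$ by short intervals is needed; condition~(\ref{eq: growth}) is then exactly what defeats the factor $h_n$ in the Hausdorff-content estimate for every $s>0$.
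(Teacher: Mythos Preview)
Your overall architecture — build a short sub-base inside $F_n$, tower it up, bound the $s$-Hausdorff content, and combine Borel--Cantelli with ergodicity on the $\limsup$ — matches the paper's. The genuine gap is the measure lower bound $\mu(J_n^\ast)\geq c'/h_n$. A pigeonhole over a cover of $F_n$ by intervals of length $(1-c)^{M_n}|F_n|$ uses roughly $(1-c)^{-M_n}$ pieces, so the best piece is only guaranteed $\mu$-measure $\gtrsim \mu(F_n)(1-c)^{M_n}$. Since $M_n/\log h_n\to\infty$, this quantity is \emph{much smaller} than $c'/h_n$, not larger; the pigeonhole yields nothing usable. In fact no \emph{single} interval of length $\lesssim(1-c)^{M_n}|F_n|$ can work: the natural candidates (iterates of the ``fundamental domain'' below) each carry $\mu$-measure only $\approx\mu(F_n)/M_n$, so the resulting sub-tower has $\mu$-measure $\approx\mu(\mathcal{F}_n)/M_n\to 0$.

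The missing idea, and the heart of the paper's proof, is to take a \emph{union} of many such intervals rather than one. From~(\ref{eq: suff_rigid}) and continuity one endpoint, say $l_n$, satisfies $T^{kh_n}(l_n)\in F_n$ for $0\le k\le M_n$; the intervals $T^{kh_n}\big((l_n,T^{h_n}(l_n))\big)$, $k=0,\dots,M_n-1$, are then pairwise disjoint in $F_n$, have \emph{equal} $\mu$-measure (being $T$-images of one another), and have Lebesgue length $\le\sigma^k|F_n|$. Pick an auxiliary sequence $L_n$ with $L_n/\log h_n\to\infty$ but $L_n/M_n\to 0$, and set
\[
G_n=\bigsqcup_{k=L_n}^{M_n-1} T^{kh_n}\big((l_n,T^{h_n}(l_n))\big).
\]
Summing a geometric series gives $|G_n|\lesssim\sigma^{L_n}|F_n|$, while the equal-measure property gives $\mu(G_n)/\mu(F_n)\ge(M_n-L_n)/(M_n+1)\to 1$. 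The sub-tower $\mathcal{G}_n$ over $G_n$ therefore has $\mu(\mathcal{G}_n)$ bounded below by a fixed constant, and the Hausdorff-content estimate proceeds exactly as you wrote, with $L_n$ replacing your $M_n$. Your bounded-distortion concern for the partial iterates $T^k|_{F_n}$ is legitimate but appears identically in the paper's argument (which simply invokes $C_T=\sup T''/T'$); it is not what separates your attempt from a complete proof.
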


Proposition \ref{prop: criterion} will be proven in Section \ref{sc: proof_criterion}. The proof of Theorem \ref{thm: HD_multibreak} (or more precisely, Theorem \ref{thm: HD_AIET} of which Theorem \ref{thm: HD_multibreak} will be a consequence) will be an application of the previous criterion.

To prove Theorem \ref{thm: HD_multibreak}, it is enough to consider the case of PL-homeomorphisms. In fact, it follows from a recent result by P. Berk and the author that, generically, a P-homeomorphism of class $C^{2 + \epsilon}$ with zero mean nonlinearity is $C^1$ conjugated to a PL-homeomorphism. Let us point out that this linearization result was previously shown by K. Cunha and D. Smania \cite{cunha_rigidity_2014} in the particular case of P-homeomorphisms with bounded combinatorics. 

To show that generic PL-homeomorphisms fulfill the hypotheses of Proposition \ref{prop: criterion}, we will use renormalization techniques for interval exchange transformations (IETs). We treat PL-homeomorphisms as affine interval exchange transformations (AIETs) by parametrizing the circle $\T$ as
\[ \Function{\varphi}{[0, 1)}{\T}{x}{e^{2\pi i x}},\]
and restricting ourselves to PL-homeomorphisms such that $\varphi(0)$ is a break-point of $f$. Then, if $f$ has $d \geq 1$ break points, the map $\varphi^{-1} \circ f \circ \varphi$ can be seen as a well-defined AIET on $d + 1$ intervals. See Section \ref{sc: P-homeomorphisms} for more details on this identification. 

Theorem \ref{thm: HD_multibreak} will be a direct consequence of Theorem \ref{thm: HD_AIET}, which is an analogous result in the case of AIETs of rotation type. For clarity, we postpone a precise statement to Section \ref{sc: results}.

\section{Proof of the zero HD criterion}
\label{sc: proof_criterion}

The following lemma is a well-known fact. 

\begin{lemma}
\label{lemma: full_measure_rokhlin}
Let $(T, X, \mu)$ be an ergodic measure preserving transformation on a probability space. Then, for any $c > 0$ and any sequence of Rokhlin towers 
\[\twr_k := \twr(F_k, h_k) = \bigcup_{i = 0}^{h_k - 1} T^i(F_k),\]
 with $h_k \to +\infty$ and $\mu(\twr_k) > c,$
\[ \mu_T \left(\bigcap_{n \geq 0} \bigcup_{k \geq n} \twr_{k} \right) = 1.\]
\end{lemma}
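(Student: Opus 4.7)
Let $A := \bigcap_{n \geq 0}\bigcup_{k \geq n}\twr_k$ denote the limsup set. The plan is first to establish the lower bound $\mu(A) \geq c$ by a reverse Fatou argument, and then to upgrade it to $\mu(A) = 1$ by exhibiting a subset of $A$ that is $T$-invariant modulo null sets, at which point ergodicity of $\mu$ closes the argument. The reverse Fatou step is immediate: since $(X, \mu)$ is a probability space, $\mu(A) = \mu(\limsup_k \twr_k) \geq \limsup_k \mu(\twr_k) \geq c$.

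The substantive step is the invariance argument. The key computation coming from the tower structure $\twr_k = \bigsqcup_{i=0}^{h_k-1}T^i F_k$ is the inclusion $T^{-1}\twr_k \,\triangle\, \twr_k \subseteq T^{-1}F_k \cup T^{h_k - 1}F_k$, and from $h_k\mu(F_k) = \mu(\twr_k) \leq 1$ one obtains $\mu(F_k) \leq 1/h_k \to 0$. The main obstacle is that the assumption $h_k \to \infty$ alone does not guarantee summability $\sum_k \mu(F_k) < \infty$, which is what would be required to apply the first Borel-Cantelli lemma to $\limsup_k(T^{-1}F_k \cup T^{h_k-1}F_k)$ and directly deduce $\mu(T^{-1}A \,\triangle\, A) = 0$.

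To bypass this, I would extract a subsequence $(k_j)_j$ with $\mu(F_{k_j}) \leq 2^{-j}$ (possible since $\mu(F_k) \to 0$) and consider the smaller limsup set $A^\ast := \bigcap_n \bigcup_{j \geq n}\twr_{k_j} \subseteq A$. Reverse Fatou along $(k_j)$ still yields $\mu(A^\ast) \geq c$. Using the pointwise set-theoretic inclusion $(\limsup_j B_j) \,\triangle\, (\limsup_j C_j) \subseteq \limsup_j (B_j \,\triangle\, C_j)$, one obtains
\[ T^{-1}A^\ast \,\triangle\, A^\ast \;\subseteq\; \limsup_j\bigl(T^{-1}F_{k_j} \cup T^{h_{k_j}-1}F_{k_j}\bigr), \]
and the now-summable bound $\sum_j \mu(F_{k_j}) < \infty$ together with the first Borel-Cantelli lemma gives $\mu(T^{-1}A^\ast \,\triangle\, A^\ast) = 0$. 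Hence $A^\ast$ is $T$-invariant modulo null sets, so ergodicity forces $\mu(A^\ast) \in \{0,1\}$, and the lower bound $\mu(A^\ast) \geq c > 0$ yields $\mu(A^\ast) = 1$. Since $A^\ast \subseteq A$, we conclude $\mu(A) = 1$.
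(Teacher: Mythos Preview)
Your proof is correct and follows essentially the same approach as the paper's: lower-bound the limsup by reverse Fatou, pass to a subsequence along which $\sum \mu(F_{k_j}) < \infty$, use the tower inclusion $T^{-1}\twr_k \,\triangle\, \twr_k \subseteq T^{-1}F_k \cup T^{h_k-1}F_k$ to control $T^{-1}A^\ast \,\triangle\, A^\ast$, and then invoke ergodicity. The only cosmetic difference is that the paper writes the subsequence step as ``up to taking a subsequence we may assume $\sum_n \mu(F_n) < \infty$'' and bounds the symmetric difference by the tail sum $2\sum_{k\geq n}\mu(F_k)$ directly, whereas you make the subsequence and the auxiliary set $A^\ast \subseteq A$ explicit and phrase the endgame via Borel--Cantelli; the underlying argument is the same.
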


\begin{proof}
Let \[A = \bigcap_{n \geq 0} \bigcup_{k \geq n} \twr_{k}.\]
Since $A$ is the intersection of a decreasing sequence of sets of measure at least $c$, it follows that $\mu(A) \geq c.$ Since $\mu$ is an ergodic $T$-invariant measure, it suffices to show that $A$ is a $T$-invariant set. Notice that
\[ A \Delta T^{-1}(A) \subset \bigcup_{k \geq n} T^{-1}(F_k) \cup T^{h_k - 1}(F_k),\]
for any $n \in \N.$ Up to take a subsequence and since $h_n \to +\infty$, we may assume
\[ \sum_{n \geq 0} \mu(F_n) < +\infty.\]
Therefore 
\[ \mu(A \Delta T^{-1}(A)) \leq \lim_{n \to \infty} 2 \sum_{k \geq n} \mu(F_k) = 0. \]
Hence $A$ is $T$-invariant set. By ergodicity, $\mu(A) = 1$. 
\end{proof}

We are now in a position to prove Proposition \ref{prop: criterion}. 

\begin{proof}[Proof of Proposition \ref{prop: criterion}]
For any $n \geq 0$, let us denote the left and right endpoints of $F_n$ by $l_n$ and $r_n$, respectively. Notice that (\ref{eq: suff_rigid}), together with the continuity of $T^{h_n}\mid_{F_n}$, imply that either 
\begin{equation}
\label{eq: nb_iterates_lower}
 T^{kh_n}(l_n) \in F_n, \quad \text{ for all }0 < k \leq M_n,
\end{equation}
 or
 \[ T^{kh_n}(r_n) \in F_n, \quad \text{ for all }0 < k \leq M_n.\]
Clearly, one of the two equations above must hold for infinitely many values of $n$. Hence, up to considering a subsequence, we may assume WLOG that one of the two equations holds for all $n \geq 0$. From now on, and for the sake of simplicity, let us assume that the first of the two equations holds for all $n \geq 0$, the other case being analogous. Moreover, by taking $M_n$ bigger if necessary, we may assume that 
\begin{equation}
\label{eq: nb_iterates_upper}
T^{(M_{n} + 1)h_n}(l_n) \notin F_n.
\end{equation}
Similarly, by (\ref{cond: uniform_slope}), we may assume WLOG that either $DT^{h_n}\mid_{F_n}$ is uniformly bigger than one for all $n \geq 0$ or it is uniformly smaller than one for all $n\geq 0$. For the sake of simplicity, let us assume that
\[ \sigma = \sup_{\substack{x \in F_n \\ n \geq 0 }} DT^{h_n}(x) < 1,\]
the other case being analogous. 

Let $(L_n)_{n \geq 0}$ be a sequence of natural numbers such that 
\[ \frac{L_n}{\log h_n} \to \infty, \hskip1cm \frac{L_n}{M_n} \to 0,\]
and define
\[ G_n =\bigsqcup_{i = L_n}^{M_n - 1} T^{ih_n} \left( \big(l_n, T^{h_n}(l_n)\big)\right), \hskip1cm \mathcal{G}_n = \bigsqcup_{j = 0}^{h_n - 1} T^{j}(G_n)\]
 \[ \mathcal{X}_n = \bigcup_{k \geq n} \mathcal{G}_k,\hskip1cm \mathcal{X} = \bigcap_{n \geq 0} \mathcal{X}_n,\]
for any $n \geq 0$. We will show that $\mu(\mathcal{X}) = 1$ and $\dim_H(\mathcal{X}) = 0$.

Notice that $G_n \subset F_n$ and $\mathcal{G}_n \subset \mathcal{F}_n$. We shall see that, although $\mathcal{G}_n$ has a very small Hausdorff content, its $\mu$-measure is comparable to that of $\mathcal{F}_n$. More precisely, we can show the following.
\begin{claim}
For any $0 < s < 1$, there exists $C > 0$ such that \[C^s_H(\mathcal{G}_n) \leq C e^{\frac{\log \sigma }{2} L_n}\]
for any $n \geq 0$. Moreover, 
$$\inf_{n \geq 0} \mu(\mathcal{G}_n) > 0.$$
\end{claim}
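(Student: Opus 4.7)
The plan is to handle the lower bound on $\mu(\mathcal{G}_n)$ and the upper bound on $C^s_H(\mathcal{G}_n)$ separately, relying on the $T$-invariance of $\mu$ for the former and on a Rohlin-tower cover combined with the power-mean inequality for the latter.

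The first observation is that $G_n$ is actually a single interval. Indeed, by (\ref{eq: nb_iterates_lower}) the iterates $T^{ih_n}(l_n)$, $0 \le i \le M_n$, lie in $F_n$ and form a monotonically increasing sequence, so the $T^{ih_n}(I) = (T^{ih_n}(l_n), T^{(i+1)h_n}(l_n))$ are consecutive subintervals of $F_n$; concatenating those with $L_n \le i \le M_n - 1$ yields $G_n = (T^{L_nh_n}(l_n), T^{M_nh_n}(l_n))$. Moreover, by (\ref{eq: nb_iterates_upper}) and orientation-preservation, $T^{(M_n + 1)h_n}(l_n) > r_n$, so the remainder $(T^{M_nh_n}(l_n), r_n)$ is contained in $T^{M_nh_n}(I)$. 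By $T$-invariance, $\mu(T^{ih_n}(I)) = \mu(I)$, and splitting $F_n$ accordingly gives
\[
\mu(F_n) = M_n \mu(I) + \mu\bigl((T^{M_nh_n}(l_n), r_n)\bigr) \le (M_n + 1)\mu(I).
\]
Combined with $\mu(G_n) = (M_n - L_n)\mu(I)$, this yields $\mu(G_n)/\mu(F_n) \ge (M_n - L_n)/(M_n + 1) \to 1$, using $L_n/M_n \to 0$. Multiplying by $h_n$ and using $\mu(\mathcal{F}_n) \ge c$ gives $\mu(\mathcal{G}_n) \ge c/2$ for large $n$, hence $\inf_n \mu(\mathcal{G}_n) > 0$.

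For the Hausdorff content, the contraction $\sup_{F_n} DT^{h_n} \le \sigma$ yields $|T^{ih_n}(I)| \le \sigma^i |I|$, whence $|G_n| \le \sigma^{L_n}|I|/(1-\sigma) \le C\sigma^{L_n}|F_n|$. Since the Rohlin-tower property forces $T^j|_{F_n}$ to be a continuous bijection for $0 \le j \le h_n - 1$, each $T^j(G_n)$ is a single interval, and covering $\mathcal{G}_n$ by these $h_n$ intervals and applying the power-mean inequality gives
\[
C^s_H(\mathcal{G}_n) \le \sum_{j = 0}^{h_n - 1}|T^j(G_n)|^s \le h_n^{1-s}\,\textup{Leb}(\mathcal{G}_n)^s.
\]
In the AIET setting that the main theorem ultimately concerns, the affinity of $T^j|_{F_n}$ gives $|T^j(G_n)|/|T^j(F_n)| = |G_n|/|F_n|$, and therefore $\textup{Leb}(\mathcal{G}_n) \le (|G_n|/|F_n|)\textup{Leb}(\mathcal{F}_n) \le C\sigma^{L_n}$. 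Feeding this back and using the growth condition (\ref{eq: growth}) to see that $h_n^{1-s} = e^{o(L_n)}$ yields exponential decay of $C^s_H(\mathcal{G}_n)$ in $L_n$.

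The main technical point is achieving the stated rate $\tfrac{\log\sigma}{2}L_n$ uniformly in $s \in (0,1)$: the bare power-mean estimate above produces rate $s\log\sigma + o(1)$, which matches the claim for $s \ge 1/2$ but is too weak for smaller $s$. For $s < 1/2$ I expect one must refine the cover by splitting the indices $j$ according to whether $|T^j(F_n)|$ is large or small, exploiting $\sum_j |T^j(F_n)| \le 1$ to bound the number of ``large'' levels, and combining this with the pointwise estimate $|T^j(G_n)| \le C\sigma^{L_n}$ available from the AIET formula together with $|T^j(F_n)| \le 1$; an appropriate optimization of the threshold should then recover the rate $\tfrac{\log\sigma}{2}L_n$.
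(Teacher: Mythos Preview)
Your argument follows the same route as the paper's: bound $|G_n|$ by a geometric series to get $|G_n|\le C\sigma^{L_n}|F_n|$, transfer this ratio up the tower, apply the power-mean inequality to $\sum_j|T^j(G_n)|^s$, and obtain the measure bound from $\mu(G_n)/\mu(F_n)\ge (M_n-L_n)/(M_n+1)$. Two remarks are in order.

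First, you invoke affinity of $T^j|_{F_n}$ to transport the ratio $|G_n|/|F_n|$ up the tower, which restricts you to AIETs. The paper handles general piecewise smooth $T$ (as the proposition is stated) by the standard bounded-distortion estimate: since the floors $T^i(F_n)$, $0\le i<h_n$, are disjoint in $[0,1)$ and $\sup|T''/T'|<\infty$, one has $|T^j(G_n)|\le C\,\sigma^{L_n}|T^j(F_n)|$ for all $j$. This is the only point where your argument is narrower than needed, and the fix is routine.

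Second, your observation about the rate is sharper than the paper's own computation. The paper writes $\sum_j|T^j(G_n)|^s\le C\sigma^{L_n}\sum_j|T^j(F_n)|^s$, silently dropping the exponent $s$ from the factor $C\sigma^{L_n}$; with the exponent restored the honest rate is $s\log\sigma$, which, as you correctly note, only dominates $\tfrac12\log\sigma$ for $s\ge\tfrac12$. So the stated constant $\tfrac{\log\sigma}{2}$ is not actually established by the paper's argument for small $s$ either. This is harmless: the sole use of the Claim in the proof of the proposition is to pass to a subsequence along which $\sum_k C_H^s(\mathcal{G}_k)<\infty$, and for that it suffices that $C_H^s(\mathcal{G}_n)\le C\exp\bigl(sL_n\log\sigma+(1-s)\log h_n\bigr)\to 0$, which you already have. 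Your proposed refinement for $s<\tfrac12$ is therefore unnecessary.
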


Before proving this claim, let us show how to conclude the proof of the proposition. By Lemma \ref{lemma: full_measure_rokhlin} and the previous claim, $\mu(\mathcal{X}) = 1.$ Moreover, up to take a subsequence, we may assume WLOG that 
\[ \sum_{k \geq 0} C_H^s(\mathcal{G}_k) < +\infty. \]
Thus
\[ C_H^s (\mathcal{X}) \leq \liminf_{n\to \infty } C_H^s (\mathcal{X}_n) \leq \liminf_{n\to \infty }\sum_{k \geq n} C_H^s(\mathcal{G}_k) = 0,\]
for any $0 < s < 1$. Hence,
 \[ \dim_H(\mathcal{X}) = \inf\{s > 0 \mid C_H^s(\mathcal{X}) = 0 \} = 0.\]
Therefore,
\[ \dim_H(\mu) = \inf \{ \dim_H(X) \mid X \subset [0, 1);\, \mu(X) = 1\} = 0.\]

\begin{proof}[Proof of the Claim] Fix $n \geq 0$. Then
\[|G_n| = \sum_{i = L_n}^{M_n} \left| T^{ih_n} \big( \big(l_n, T^{h_n}(l_n)\big)\big) \right| \leq \sum_{i = L_n}^{M_n} \sigma^i \big| \big(l_n, T^{h_n}(l_n)\big)\big| \leq C_\sigma |F_n|\sigma^{L_n}, \]
where $C_\sigma = \frac{1 - \sigma^{M_n - L_n}}{1 - \sigma}.$ Moreover, it follows from simple bounded distortion arguments that
\[ |T^j(G_n)| \leq C_\sigma C_T \sigma^{L_n} |T^j(F_n)|,\]
for any $0 \leq j <h_n$, where $C_T = \max_{x \in [0, 1)} \frac{T''(x)}{T'(x)}$. 

Hence, for any $0 < s < 1$, 
\begin{align*}
C_H^s(\mathcal{G}_n) & \leq \sum_{j = 0}^{h_n - 1} |T^j(G_n)|^s \\
& \leq C_\sigma C_T \sigma^{L_n} \sum_{j = 0}^{h_n - 1} |T^j(F_n)|^s \\
& \leq C_\sigma C_T \sigma^{L_n} h_n^{1 - s} \\
& = C_\sigma C_T \exp(L_n \log \sigma + (1 - s) \log h_n) \\
& \leq C_\sigma C_T C_{h, s} \exp\big(\tfrac{\log \sigma}{2} L_n \big),
\end{align*}
for some positive constant $C_{h,s}$ independent of $n$. 

By (\ref{eq: nb_iterates_lower}) and (\ref{eq: nb_iterates_upper}), 
\[ M_n\mu((l_n, T^{h_n}(l_n))) \leq \mu(F_n) \leq (M_n + 1)\mu((l_n, T^{h_n}(l_n))).\]
Hence
\[ \frac{M_n - L_n}{M_n + 1} \leq \frac{\mu(G_n)}{\mu(F_n)} \leq \frac{M_n - L_n}{M_n}, \]
which implies 
\[ \frac{\mu(\mathcal{G}_n)}{\mu(\mathcal{F}_n)} \to 1.\]
\end{proof}
This finishes the proof of the proposition.
\end{proof}

\section{Proof of Theorem \ref{thm: HD_AIET}}

For the sake of simplicity, let us start by introducing some notations that will be used in the remaining of this work.

For any $d \geq 2$, we denote by $\pi^* = (\pi^*_0, \pi^*_1) \in \mathfrak{G}_d^0$ a fixed combinatorial datum verifying
\begin{equation}
\label{eq: rotation}
 \pi_1^* \circ {\pi_0^*}^{-1}(1) = d, \hskip1cm \pi_1^* \circ {\pi_0^*}^{-1}(k) = k - 1,
\end{equation} 
for $k = 2, \dots, d$. Clearly, any combinatorial datum verifying the previous equation is of rotation type. Notice that, although a permutation $\pi^* \in \mathfrak{G}_d^0$ verifying (\ref{eq: rotation}) always exists, $\pi^*$ is not necessarily unique. In fact, for $d = 4$, the permutations
\[\begin{pmatrix} A & B & C & D \\ B & C & D & A \end{pmatrix}, \hskip1cm \begin{pmatrix} A & C & B & D \\ C & B & D & A \end{pmatrix},\]
verify (\ref{eq: rotation}). We will denote the last letters in the top and bottom rows of $\pi^*$ by $\alpha^* = {\pi_0^*}^{-1}(d)$ and $\beta^* = {\pi_1^*}^{-1}(d)$. 

 With these notations, we can explicitly state the generic condition in Theorem \ref{thm: HD_AIET}.

\begin{lemma}
\label{lemma: generic_condition}
For any $0 < c_0 < \frac{1}{10d}$ sufficiently small and for any increasing sequence $\{C(n)\}_{n \in \N} \subset \N$ verifying
\begin{equation}
\label{eq: conditions_increasing_sequence}
\sum_{n \geq 1} \frac{1}{n C(n)} = + \infty,
\end{equation}
the following holds.

 For a.e. $(\lambda, \pi) \in \Delta_d \times \mathfrak{G}_d^0$ of rotation type, there exists an increasing sequence $\{n_k\}_{k \in \N} \subset \N$ such that:
\begin{enumerate}
\item \label{item: combinatorics} $\pi^{(n_k)} = \pi^*$, with $\pi^* \in \mathfrak{G}_d^0$ as in (\ref{eq: rotation}),
\item \label{item: lengths} $\lambda_\alpha^{(n_k)} > n_kC(n_k)\lambda^{(n_k)}_{\beta^*}$, for all $\alpha \neq \beta^*$,
\item \label{item: lengths_balanced} $\lambda^{(n_k)}_{\alpha} > c_0,$ for all $\alpha \neq \beta^* $.
\item \label{item: heights} $\frac{h^{(n_k)}_\alpha}{h^{(n_k)}_\beta} > c_0$, for all $\alpha, \beta \in \A.$
\end{enumerate}
\end{lemma}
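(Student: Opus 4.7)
My plan is to secure items (\ref{item: combinatorics}), (\ref{item: lengths_balanced}), and (\ref{item: heights}) along a set of times of positive density using ergodicity of the Zorich map $\cZ$, and then to extract from those times a subsequence that additionally satisfies the quantitative smallness (\ref{item: lengths}) via a shrinking-target (dynamical Borel--Cantelli) argument, fuelled by the divergence hypothesis (\ref{eq: conditions_increasing_sequence}).

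Define the balanced cylinder
\[
A = \{(\lambda, \pi^*) \in \Delta_{\pi^*} : \lambda_\alpha > c_0 \text{ for all } \alpha \neq \beta^*\}.
\]
Since $c_0 < 1/(10d)$, the Lebesgue cross-section of $A$ is positive; since the density of $\mu_\cZ$ is a rational function uniformly bounded away from $0$ in the interior of $\Delta_{\pi^*}$, this yields $\mu_\cZ(A) > 0$. Birkhoff's theorem for the ergodic system $(\cZ, \mu_\cZ)$ then gives, for $\mu_\cZ$-a.e.\ $T \in \Dom$, a set $\mathcal{N}(T) := \{n \in \N : \cZ^n(T) \in A\}$ of positive density, hence infinite, and every $n \in \mathcal{N}(T)$ automatically fulfils (\ref{item: combinatorics}) and (\ref{item: lengths_balanced}). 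For item (\ref{item: heights}), I invoke the Oseledets decomposition of Section \ref{sc: oseledets}: since $\pi^*$ is of rotation type, $\dim F^u = 1$, and the positivity of the top Lyapunov direction of the Zorich cocycle (which follows from the non-negativity of its matrices together with Perron--Frobenius-type considerations) implies that $h^{(n)}/\|h^{(n)}\|_1$ converges to a strictly positive vector for $\mu_\cZ$-a.e.\ $T$. Choosing $c_0$ small enough, the ratios $h^{(n)}_\alpha / h^{(n)}_\beta$ then stay above $c_0$ for every $n$ larger than some (random) $N(T)$.

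For item (\ref{item: lengths}), condition (\ref{item: lengths_balanced}) reduces the requirement to $\lambda_{\beta^*}^{(n)} < c_0/(nC(n))$, so I introduce the shrinking targets
\[
B_n := \Big\{(\lambda, \pi^*) \in A : \lambda_{\beta^*} < \tfrac{c_0}{nC(n)}\Big\} \subset A.
\]
Each $B_n$ is a slab of width $c_0/(nC(n))$ inside the region of $A$ where the density of $\mu_\cZ$ is two-sided bounded, so $\mu_\cZ(B_n) \asymp 1/(nC(n))$; by (\ref{eq: conditions_increasing_sequence}) this gives $\sum_n \mu_\cZ(B_n) = +\infty$. Any $n \geq N(T)$ with $\cZ^n(T) \in B_n$ simultaneously lies in $\mathcal{N}(T)$ and verifies all four required properties, so an enumeration of such $n$'s produces the desired sequence $\{n_k\}$.

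The main obstacle is to guarantee that, for $\mu_\cZ$-a.e.\ $T$, infinitely many $n$ satisfy $\cZ^n(T) \in B_n$: this is a dynamical Borel--Cantelli statement which does not follow formally from $\sum \mu_\cZ(B_n) = \infty$. My plan here is to exploit strong mixing of $\cZ$. After passing to an induced map on a compact core of $\Dom$ on which the Avila--Gou\"ezel--Yoccoz spectral-gap theorem provides exponential decay of correlations for Lipschitz observables, each slab $B_n$ admits uniformly regular approximations, and a standard quasi-independence (Sprind\v zuk-type) lemma then yields $\#\{n \leq N : \cZ^n(T) \in B_n\}/\sum_{n \leq N}\mu_\cZ(B_n) \to 1$ almost surely, which suffices. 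The technical delicacy lies in checking uniformity in $n$ of the approximation constants for $B_n$ near the boundary of $\Delta_{\pi^*}$, so that the quasi-independence bound is applicable uniformly along the whole family.
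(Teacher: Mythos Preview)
Your plan has two genuine gaps. First, the Oseledets argument for item (\ref{item: heights}) does not yield a uniform $c_0$: even granting that $h^{(n)}/\|h^{(n)}\|_1$ converges a.e.\ to a strictly positive vector, that limit depends on $T$, and in fact $\liminf_n \min_{\alpha,\beta} h^{(n)}_\alpha/h^{(n)}_\beta = 0$ for a.e.\ $T$. Indeed $h^{(n)} = B(\cZ^{n-1}T)^T h^{(n-1)}$, and a single Zorich step with large acceleration time $z$ leaves the winner's height fixed while the losers' heights grow by a factor comparable to $z$; since $z$ is a.e.\ unbounded along the orbit, the height ratios are infinitely often arbitrarily small. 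Because the balance of $h^{(n)}$ depends on the \emph{past} of the orbit and not on $(\lambda^{(n)},\pi^{(n)})$ alone, you cannot repair this by refining the target $B_n$. Second, your slabs $B_n$ accumulate on the boundary face $\{\lambda_{\beta^*}=0\}$ of $\Delta_d$, where the density of $\mu_\cZ$ is not known to be two-sided bounded; this undermines both the estimate $\mu_\cZ(B_n)\asymp 1/(nC(n))$ and the hypotheses of the available Borel--Cantelli theorems (whether via AGY or via \cite{aimino_recurrence_2017}), which require the targets to lie inside a cylinder whose closure is contained in the open simplex.

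The paper closes both gaps with a single device: it fixes a finite Rauzy path $\gamma$, ending at the predecessor $\pi_*$ of $\pi^*$, whose cocycle matrix $A_\gamma$ is \emph{positive}, and takes as targets $A_n = \{(\lambda,\pi)\in\Delta_\gamma : A_\gamma^{-1}\lambda/|A_\gamma^{-1}\lambda|_1 \in \widehat A_n\}$, where $\widehat A_n$ encodes the desired smallness of $\lambda_{\beta^*}$ after one further step. Positivity of $A_\gamma$ has two effects. It forces $\overline{\Delta_{\gamma'}}$ into the open simplex, so the Zorich density is uniformly bounded on the $A_n$ and Theorem \ref{thm: borel_cantelli} applies directly, yielding $\sum_n \mu_\cZ(A_n)=+\infty$ from (\ref{eq: conditions_increasing_sequence}). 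And whenever $\cZ^m(T)\in A_n$, the next $N$ Zorich steps follow $\gamma'$, so $h^{(m+N)}=A_{\gamma'}^T h^{(m)}$ with $A_{\gamma'}$ positive, whence $h^{(m+N)}_\alpha/h^{(m+N)}_\beta \geq \min_\delta (A_{\gamma'})_{\delta\alpha}/\max_\delta (A_{\gamma'})_{\delta\beta}$ independently of $h^{(m)}$. This Perron--Frobenius contraction through a fixed positive block is exactly the idea missing from your plan.
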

Before proving the lemma above, let us mention how these conditions will appear in the proof of (the second assertion of) Theorem \ref{thm: HD_AIET}. As mentioned before, this assertion will be a consequence of Proposition \ref{prop: criterion}, for which an appropriate sequence of Rohlin towers is required. Thus, in order to build towers for a given AIET $f$ with $\gamma(f) = (\lambda, \pi)$, we start by building towers for the IET $T$ defined by $(\lambda, \pi)$, to which $f$ is conjugated. As we shall see, these towers will already verify some of the hypotheses of Proposition \ref{prop: criterion}.

 Given $T= (\lambda, \pi)$ as in Lemma \ref{lemma: generic_condition}, and using the notations in Section \ref{sc: notations}, it follows from the renormalization procedure that, for any $\alpha \in \A$, the set $\bigsqcup_{j = 0}^{q_\alpha^{(n_k)}} T^j\big( I_\alpha^{(n_k)}(T)\big)$ can be seen as a well defined Rohlin tower (see Condition \ref{cond: tower}) such that $T^{q_\alpha^{(n_k)}}\mid_{ I_\alpha^{(n_k)}(T)}$ is smooth (see Condition \ref{cond: smoothness}). The first two assertions in the previous lemma guarantee that, for $\alpha \neq \beta^*$, a big number of iterates (approx. $n_kC(n_k)$) of $I^{(n_k)}_{\beta^*}$ with respect to $T^{(n_k)}$ are contained in each of the intervals $I^{(n_k)}_\alpha$ and, on each of them, consecutive iterates appear as adjacent intervals (see Condition \ref{cond: rigidity}). The last three assertions in the lemma guarantee that the Lebesgue measure of the Rohlin tower associated with each of the intervals $I^{(n_k)}_\alpha,$ for $\alpha \neq \beta^*$, is bounded from below by a uniform constant (see Condition \ref{cond: measure}).

More precisely, from the previous lemma, we immediately conclude the following. 

\begin{corollary}
\label{cor: generic_condition}
Let $T = (\lambda, \pi) \in \Delta_d \times \mathfrak{G}_d^0$ as in Lemma \ref{lemma: generic_condition}. There exists an increasing sequence $\{n_k\}_{k \in \N} \subset \N$ such that the following holds.
 \begin{enumerate}
\item For any $k \in \N$, there exist natural numbers $1 = l_0^k < l_1^k < \dots < l_{d - 1}^k$ such that $l_{j + 1}^k - l_j^k \geq {n_kC(n_k) - 2}$ and 
\[ \bigsqcup_{i = l_j^k + 1}^{l_{j + 1}^k - 1} {T^{(n_k)}}^i \left( I^{(n_k)}_{\beta^*} \right) \subset I^{(n_k)}_{{\pi_0^*}^{-1}(d - j)},\]
for $j = 1, \dots, d- 1.$ In particular 
\[ \bigcap_{i = 0}^{n_kC(n_k) - 2} {T^{(n_k)}}^i \left( I^{(n_k)}_{{\pi_0^*}^{-1}(d - j)} \right) \neq \emptyset,\]
or $j = 1, \dots, d- 1.$
\item There exists a constant $c > 0$, not depending on $k$ or $\alpha$, such that 
\[ \min_{\alpha \neq \beta^*} \sum_{i = 0}^{q^{(n_k)}_\alpha} \left| T^i \left( I_\alpha^{(n_k)} \right)\right| = \min_{\alpha \neq \beta^*} q_\alpha^{(n_k)} | I_\alpha^{(n_k)}| > c,\]
for all $k \in \N$.
\end{enumerate}
\end{corollary}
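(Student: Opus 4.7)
The plan is to apply Lemma \ref{lemma: generic_condition} to obtain the sequence $\{n_k\}$ satisfying conclusions (\ref{item: combinatorics})--(\ref{item: heights}), and to deduce both assertions from the crucial observation that these conditions force $T^{(n_k)}$ to be, up to the affine rescaling $I^{(n_k)} \to [0,1)$, a rigid circle rotation of very small rotation number. Indeed, since $\pi^{(n_k)} = \pi^*$ satisfies (\ref{eq: rotation}), a direct computation from the translation vector formula gives $w_\alpha = -\lambda_{\beta^*}^{(n_k)}$ for every $\alpha \neq \beta^*$ and $w_{\beta^*} = 1 - \lambda_{\beta^*}^{(n_k)}$, so that the rescaled renormalization acts on $[0,1)$ as the rigid rotation $R(x) = x - \lambda_{\beta^*}^{(n_k)} \pmod{1}$.

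For part (1), the iterates $R^i(I_{\beta^*}^{(n_k)})$ are successive leftward translates of $I_{\beta^*}^{(n_k)} = [0, \lambda_{\beta^*}^{(n_k)})$ by increments $\lambda_{\beta^*}^{(n_k)}$, and hence sweep across $[0,1)$, visiting the remaining top intervals in the rotation order $I_{{\pi_0^*}^{-1}(d)}^{(n_k)}, I_{{\pi_0^*}^{-1}(d-1)}^{(n_k)}, \ldots, I_{{\pi_0^*}^{-1}(2)}^{(n_k)}$ before returning to $I_{\beta^*}^{(n_k)}$. I define $l_j^k$ to be the index of the unique iterate that straddles the break point separating $I_{{\pi_0^*}^{-1}(d-j+1)}^{(n_k)}$ from $I_{{\pi_0^*}^{-1}(d-j)}^{(n_k)}$, setting $l_0^k = 1$. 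By construction, iterates with index strictly between $l_j^k$ and $l_{j+1}^k$ are fully contained in $I_{{\pi_0^*}^{-1}(d-j)}^{(n_k)}$, and their number is at least $\lfloor \lambda_{{\pi_0^*}^{-1}(d-j)}^{(n_k)}/\lambda_{\beta^*}^{(n_k)} \rfloor - 1$, which by conclusion (\ref{item: lengths}) of the lemma exceeds $n_k C(n_k) - 2$. The ``in particular'' assertion reduces to the elementary observation that for a circle rotation $R$ by $-a$ and an interval $I$ of length $L > Ma$, the intersection $\bigcap_{i=0}^{M} R^i(I)$ is the nonempty subinterval of length $L - Ma$ obtained as the left edge of $I$ together with the $M$-fold leftward shift of its right edge. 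Applying this with $I = I_{{\pi_0^*}^{-1}(d-j)}^{(n_k)}$, $a = \lambda_{\beta^*}^{(n_k)}$, and $M = n_k C(n_k) - 2$, conclusion (\ref{item: lengths}) once more guarantees nonemptiness.

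For part (2), the Rohlin tower decomposition over the intervals $\{I_\alpha^{(n_k)}\}_{\alpha \in \A}$ tiles $[0,1)$, so $\sum_{\alpha \in \A} q_\alpha^{(n_k)} |I_\alpha^{(n_k)}| = 1$. Writing $|I_\alpha^{(n_k)}| = |I^{(n_k)}|\lambda_\alpha^{(n_k)}$ and $q_\alpha^{(n_k)} = h_\alpha^{(n_k)}$, this becomes
\[
q_\alpha^{(n_k)} |I_\alpha^{(n_k)}| = \frac{h_\alpha^{(n_k)}\lambda_\alpha^{(n_k)}}{\sum_{\beta \in \A} h_\beta^{(n_k)}\lambda_\beta^{(n_k)}}.
\]
For $\alpha \neq \beta^*$, conclusion (\ref{item: lengths_balanced}) bounds the numerator below by $c_0 h_\alpha^{(n_k)}$, while $\lambda_\beta^{(n_k)} \leq 1$ combined with conclusion (\ref{item: heights}) bounds the denominator above by $d \max_\beta h_\beta^{(n_k)} \leq (d/c_0) h_\alpha^{(n_k)}$. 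The resulting ratio is at least $c_0^2/d$, which furnishes the uniform lower bound with $c = c_0^2/d$.

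The only delicate point is the off-by-one bookkeeping in part (1): at each transition between consecutive top intervals, exactly one iterate of $I_{\beta^*}^{(n_k)}$ is cut by the separating break point and must be excluded from the count, which accounts for the $-2$ in $n_k C(n_k) - 2$. Apart from this minor accounting, once the rigid-rotation structure of $T^{(n_k)}$ is recognized, the four quantitative conclusions of Lemma \ref{lemma: generic_condition} translate mechanically into the two geometric conclusions of the corollary.
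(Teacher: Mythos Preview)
Your proof is correct and follows precisely the reasoning the paper leaves implicit: the paper states the corollary as an immediate consequence of Lemma~\ref{lemma: generic_condition} without supplying details, and your argument---recognizing $T^{(n_k)}$ as a rigid rotation by $-\lambda_{\beta^*}^{(n_k)}$ via the translation-vector computation, then reading off parts (1) and (2) from conditions (\ref{item: combinatorics})--(\ref{item: heights})---is exactly the intended mechanism. The minor off-by-one bookkeeping you flag is the only subtlety, and you handle it correctly.
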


Therefore, if $f$ is an AIET with $\gamma(f) = (\lambda, \pi)$ and verifying Lemma \ref{lemma: generic_condition}, it follows from the Corollary \ref{cor: generic_condition} that, for any $\alpha \neq \beta^*$, the sequence of Rohlin towers 
\begin{equation}
\label{eq: AIET_Rohlin_towers}
\bigsqcup_{j = 0}^{q_\alpha^{(n_k)}} f^j\big( I_\alpha^{(n_k)}(f)\big)
\end{equation}
verify all of the hypotheses of Proposition \ref{prop: criterion} except for Condition \ref{cond: uniform_slope}. 

In fact, since $f$ and $T$ are conjugated by some homeomorphism $h \in \textup{Hom}([0, 1)),$ verifying $f \circ h = h \circ T$, then 
\[h \left( \bigsqcup_{j = 0}^{q_\alpha^{(n_k)}} T^j\big( I_\alpha^{(n_k)}(T)\big) \right) = \bigsqcup_{j = 0}^{q_\alpha^{(n_k)}} f^j\big( I_\alpha^{(n_k)}(f)\big),\]
and
\[ \mu_f\left( \bigsqcup_{j = 0}^{q_\alpha^{(n_k)}} f^j\big( I_\alpha^{(n_k)}(f)\big) \right) = \sum_{i = 0}^{q^{(n_k)}_\alpha} \left| T^i \left( I_\alpha^{(n_k)}(T) \right)\right|,\]
where $\mu_f$ denotes the unique invariant probability measure of $f$. 

Therefore, since the towers $\bigsqcup_{j = 0}^{q_\alpha^{(n_k)}} T^j\big( I_\alpha^{(n_k)}(T)\big)$ verify Conditions \ref{cond: tower}, \ref{cond: smoothness}, \ref{cond: measure} and \ref{cond: rigidity} in Proposition \ref{prop: criterion}, it is readily seen that the towers in (\ref{eq: AIET_Rohlin_towers}) also verify these conditions. Notice that, for a fixed $\alpha \neq \beta^*$, Condition \ref{cond: uniform_slope} for the towers in (\ref{eq: AIET_Rohlin_towers}) is equivalent to
\begin{equation}
\label{eq: log_slope_cond}
\inf_{k \geq 1} \big| \omega_\alpha^{(n_k)}\big| > 0,
\end{equation}
where $ \omega_\alpha^{(n_k)}$ is the log-slope vector of $f^{(n_k)}$. Recall that if $f$ has log-slope vector $\omega$, then $ \omega^{(n_k)} = B^n(\lambda, \pi)^{T}\omega$ by Proposition \ref{prop: log_slope}. 

As we shall see (Corollary \ref{cor: central_bound_below}), if the log-slope vector of $f$ does not belong to the stable space $E^s(\lambda, \pi)$ then, up to considering a subsequence, (\ref{eq: log_slope_cond}) is satisfied for at least one $\alpha \neq \beta^*$, and thus the second assertion of Theorem \ref{thm: HD_AIET} would follow by Proposition \ref{prop: criterion} when applied to the towers given by (\ref{eq: AIET_Rohlin_towers}) for this particular $\alpha$.

To see that such $\alpha \in \A \setminus \{\beta^*\}$ indeed exists, we will use the following properties of the lengths cocycle (see Section \ref{sc: lengths_cocycle} for the definition). 

\begin{lemma}
\label{lemma: kernel_projection}
Let $(\lambda, \pi) \in \Delta_d \times \mathfrak{G}^0_d$ infinitely renormalizable with $\pi$ of rotation type. Then, for any $\omega \in \R^\A$ and for any $n \in \N$ such that $\pi^{(n)} = \pi$,
\[ \pi_{\textup{Ker}(\Omega_\pi)}\big(\omega^{(n)}\big) = \pi_{\textup{Ker}(\Omega_\pi)}(\omega),\]
where $\pi_{\textup{Ker}(\Omega_\pi)}: \R^\A \to \textup{Ker}(\Omega_\pi)$ is the orthogonal projection to $\textup{Ker}(\Omega_\pi)$ and $\omega^{(n)} = A^n(\lambda, \pi)^{-1}\omega.$ Moreover, $$\pi_{\textup{Ker}(\Omega_\pi)}(\omega) \neq 0,$$ for any $\omega \in E^{cs}(\lambda, \pi) \setminus E^{s}(\lambda, \pi)$.

\end{lemma}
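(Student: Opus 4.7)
My plan is to show that, under the hypothesis $\pi^{(n)} = \pi$, the cocycle $A^n(\lambda, \pi)^{-1}$ is block-diagonal with respect to the orthogonal decomposition $\R^\A = \textup{Ker}(\Omega_\pi) \oplus \textup{Ker}(\Omega_\pi)^\bot$ and acts as the identity on the first block. This would immediately give the desired commutation of the cocycle with $\pi_{\textup{Ker}(\Omega_\pi)}$: writing $\omega = u + v$ with $u = \pi_{\textup{Ker}(\Omega_\pi)}(\omega)$ and $v \in \textup{Ker}(\Omega_\pi)^\bot$, we would obtain $\omega^{(n)} = u + A^n(\lambda, \pi)^{-1} v$, with the second summand still in $\textup{Ker}(\Omega_\pi)^\bot$, whence $\pi_{\textup{Ker}(\Omega_\pi)}(\omega^{(n)}) = u = \pi_{\textup{Ker}(\Omega_\pi)}(\omega)$.

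The identity action on $\textup{Ker}(\Omega_\pi)$ is exactly Proposition \ref{prop: trivial_action} applied to $\pi^{(n)} = \pi$. The preservation of $\textup{Ker}(\Omega_\pi)^\bot$ is the main technical step; here I would combine the antisymmetry of $\Omega_\pi$ (so that $\textup{Ker}(\Omega_\pi)^\bot = \textup{Im}(\Omega_\pi^T) = \textup{Im}(\Omega_\pi)$) with a symplectic-type invariance $A^n(\lambda, \pi)\, \Omega_\pi\, A^n(\lambda, \pi)^T = \Omega_\pi$, valid at return times $\pi^{(n)} = \pi$, which I would check by iterating the elementary behavior of $\Omega_\pi$ under a single Rauzy-Veech move until the path closes up (alternatively, by appealing to the standard symplectic structure on the Rauzy-Veech cocycle). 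This identity rewrites as $A^n(\lambda, \pi)\, \Omega_\pi = \Omega_\pi\, A^n(\lambda, \pi)^{-T}$; applying it to any $v \in \textup{Ker}(\Omega_\pi)$ yields $\Omega_\pi\, A^n(\lambda, \pi)^{-T} v = A^n(\lambda, \pi)\, \Omega_\pi v = 0$, so $A^n(\lambda, \pi)^{-T}$ preserves $\textup{Ker}(\Omega_\pi)$. By the elementary duality that $M$ preserves a subspace $V$ if and only if $M^T$ preserves $V^\bot$, the cocycle $A^n(\lambda, \pi)^{-1}$ then preserves $\textup{Ker}(\Omega_\pi)^\bot$, completing the block-diagonal picture.

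For the \emph{moreover} part, I would read off the conclusion directly from the explicit Oseledets splitting described in Section \ref{sc: oseledets}: in the rotation-type case, $F^s(\lambda, \pi) = \langle \lambda \rangle$, $E^{cs}(\lambda, \pi) = \orth{\lambda}$, and $E^s(\lambda, \pi) = \textup{Ker}(\Omega_\pi)^\bot \cap \orth{\lambda}$. Hence $\omega \in E^{cs}(\lambda, \pi) \setminus E^{s}(\lambda, \pi)$ iff $\omega \perp \lambda$ and $\omega \notin \textup{Ker}(\Omega_\pi)^\bot$, and the second condition is equivalent to $\pi_{\textup{Ker}(\Omega_\pi)}(\omega) \neq 0$. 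The hardest part of the argument is the symplectic identity for the cocycle at return times: Proposition \ref{prop: trivial_action} by itself is not enough, since a linear map acting as the identity on a subspace can in general distort its orthogonal complement, so this extra structural input carries the technical weight and is the step I would expect to require the most care.
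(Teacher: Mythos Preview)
Your argument is correct, but the paper takes a shorter route that avoids the symplectic identity entirely. Since the log-slope evolves under the \emph{transpose} cocycle, the paper simply pairs $\omega^{(n)}$ against an arbitrary $v \in \textup{Ker}(\Omega_\pi)$ and moves the matrix across the inner product:
\[
\langle \omega^{(n)}, v\rangle \;=\; \big\langle \omega,\; (B^{(n)})^{-1} v\big\rangle \;=\; \langle \omega, v\rangle,
\]
the last equality being exactly Proposition~\ref{prop: trivial_action}. Thus $\omega^{(n)} - \omega \in \textup{Ker}(\Omega_\pi)^\bot$ and the projections agree. Abstractly, the identity $\pi_{\textup{Ker}(\Omega_\pi)}(M^T\omega) = \pi_{\textup{Ker}(\Omega_\pi)}(\omega)$ for all $\omega$ is \emph{equivalent} to $M|_{\textup{Ker}(\Omega_\pi)} = \id$, so Proposition~\ref{prop: trivial_action} alone \emph{is} enough; your worry that ``a linear map acting as the identity on a subspace can distort its orthogonal complement'' is relevant only if one insists on full block-diagonality, which the paper never needs.

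Your route establishes the stronger structural fact that the cocycle is block-diagonal with respect to $\textup{Ker}(\Omega_\pi)\oplus\textup{Ker}(\Omega_\pi)^\bot$, at the cost of importing the symplectic relation $A^n\,\Omega_\pi\,(A^n)^T = \Omega_\pi$ at return times. That relation is standard and your deduction from it is clean, so nothing is wrong---it is simply more than the lemma requires. One incidental benefit of your version: the statement as printed writes $\omega^{(n)} = A^n(\lambda,\pi)^{-1}\omega$ (the lengths cocycle), whereas the paper's own proof and the application to log-slopes via Proposition~\ref{prop: log_slope} use the transpose; your block-diagonal argument happens to cover both readings, while the paper's duality trick is tailored to the transpose. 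For the ``moreover'' clause your argument coincides with the paper's.
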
 

\begin{proof}
Fix $(\lambda, \pi) \in \Delta_d \times \mathfrak{G}^0_d$ infinitely renormalizable with $\pi$ of rotation type and let $\omega \in \R^\A.$ Then, for any $n \in \N$ such that $\pi^{(n)} = \pi$, and for any $v \in \textup{Ker}(\Omega_\pi)$,
\begin{align*}
\left| \langle \omega, v \rangle \right| & = \left| \left\langle \left({B^{(n)}}^T\right)^{-1} \omega^{(n)}, v \right
\rangle \right| \\
& = \big| \big\langle \omega^{(n)}, \big(B^{(n)}\big)^{-1}v \big\rangle \big|\\
& = | \langle \omega^{(n)}, v \rangle|,
\end{align*}
where the last equality follows from Proposition \ref{prop: trivial_action}. Hence, $$\pi_{\textup{Ker}(\Omega_\pi)}\big(\omega^{(n)}\big) = \pi_{\textup{Ker}(\Omega_\pi)}(\omega),$$ for all $n \in \N$. 

Recall that (see Section \ref{sc: oseledets}), for $\pi$ of rotation type, we have
\[
\begin{array}{ll}
 E^{cs}(\lambda, \pi) = \orth{\lambda}, & \textup{dim}\big(E^{cs}(\lambda, \pi)\big) = d - 1, \\ E^{s}(\lambda, \pi) = \textup{Ker}(\Omega_\pi)^\bot \cap \orth{\lambda}, & \textup{dim}\big(E^{s}(\lambda, \pi)\big) = 1.
 \end{array}
 \]
Hence
\[\big(E^{cs}(\lambda, \pi) \setminus E^{s}(\lambda, \pi) \big) \cap \textup{Ker}(\Omega_\pi)^\bot = \{0\},\] 
since otherwise $1 = \textup{dim}\big( \textup{Ker}(\Omega_\pi)^\bot \cap \orth{\lambda} \big) > 1$. Thus, $$\pi_{\textup{Ker}(\Omega_\pi)}(\omega) \neq 0,$$
 for any $\omega \in E^{cs}(\lambda, \pi) \setminus E^{s}(\lambda, \pi)$.
\end{proof}

As a simple consequence of Lemmas \ref{lemma: generic_condition} and \ref{lemma: kernel_projection}, we have the following. 

\begin{corollary}
\label{cor: central_bound_below}
Let $(\lambda, \pi) \in \Delta_d \times \mathfrak{G}^0_d$ as in Lemma \ref{lemma: generic_condition}. Then, for any $\omega \in E^{cs}(\lambda, \pi) \setminus E^{s}(\lambda, \pi)$, there exists a constant $c_2 > 0$, depending only on $\omega$, such that
\[ \inf_{k \geq 1} \max_{\alpha \neq \beta^*} \big| \omega^{(n_k)}_\alpha \big| > c_2, \]
where $\{n_k\}_{k \in \N} \subset \N$ is the sequence in Lemma \ref{lemma: generic_condition}.
\end{corollary}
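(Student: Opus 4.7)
The plan is to argue by contradiction, combining the kernel-projection invariance of Lemma \ref{lemma: kernel_projection} with an explicit description of $\textup{Ker}(\Omega_{\pi^*})$ for rotation-type combinatorics. Assume toward a contradiction that along some subsequence (still denoted $\{n_k\}$) one has $\max_{\alpha \neq \beta^*} |\omega^{(n_k)}_\alpha| \to 0$, and set $K := \textup{Ker}(\Omega_{\pi^*})$. By covariance of the Oseledets splitting under the cocycle $(\cZ, B^T)$, the hypothesis $\omega \in E^{cs}(\lambda, \pi) \setminus E^s(\lambda, \pi)$ upgrades to $\omega^{(n)} \in E^{cs}(\cZ^n(\lambda, \pi)) \setminus E^s(\cZ^n(\lambda, \pi))$ for every $n \geq 0$. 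Applying Lemma \ref{lemma: kernel_projection} with base point $(\lambda^{(n_1)}, \pi^*)$ (permissible, since $\pi^{(n_k - n_1)} = \pi^*$ starting from that point), the projection $v^* := \pi_K(\omega^{(n_k)})$ is independent of $k$ and nonzero.

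Next, I would compute $K$ and $K^\bot = \textup{Im}(\Omega_{\pi^*})$ directly from (\ref{eq: rotation}). For every $\alpha \neq \beta^*$ one has $\pi_b^*(\alpha) = \pi_t^*(\alpha) - 1$, so $\pi_t^*$ and $\pi_b^*$ induce the same order on $\A \setminus \{\beta^*\}$, and (\ref{eq: intersection_matrix}) forces $\Omega_{\alpha, \gamma} = 0$ for all $\alpha, \gamma \neq \beta^*$. Together with $\pi_t^*(\beta^*) = 1$ and $\pi_b^*(\beta^*) = d$, which yield $\Omega_{\beta^*, \alpha} = 1 = -\Omega_{\alpha, \beta^*}$ for every $\alpha \neq \beta^*$, this produces the explicit description
\[ K = \bigl\{v \in \R^\A : v_{\beta^*} = 0,\ \textstyle\sum_{\alpha \neq \beta^*} v_\alpha = 0 \bigr\}, \quad K^\bot = \bigl\langle e_{\beta^*},\, \textstyle\sum_{\alpha \neq \beta^*} e_\alpha \bigr\rangle. \]
Decomposing $\omega^{(n_k)} = v^* + u^{(n_k)}$ with $u^{(n_k)} \in K^\bot$, the shape of $K^\bot$ gives scalars $o_k, \delta_k \in \R$ such that $u^{(n_k)} = o_k e_{\beta^*} + \delta_k \sum_{\alpha \neq \beta^*} e_\alpha$; reading off the $\alpha$-th coordinate for $\alpha \neq \beta^*$ yields $\omega^{(n_k)}_\alpha = v^*_\alpha + \delta_k$. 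The contradiction hypothesis then forces $\delta_k \to -v^*_\alpha$ for every $\alpha \neq \beta^*$, and as $\{\delta_k\}$ has at most one limit, all values $v^*_\alpha$ with $\alpha \neq \beta^*$ must coincide; the kernel constraint $\sum_{\alpha \neq \beta^*} v^*_\alpha = 0$ then forces them all to vanish, and together with $v^*_{\beta^*} = 0$ (also from $v^* \in K$) this yields $v^* = 0$, contradicting the previous step.

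I expect the main technical point to be the identification of $\textup{Im}(\Omega_{\pi^*})$ with the two-dimensional span $\bigl\langle e_{\beta^*},\, \sum_{\alpha \neq \beta^*} e_\alpha \bigr\rangle$; this rigid combinatorial feature of rotation-type data is what makes the inconsistency between a constant nonzero projection to $K$ and the coordinate-wise decay hypothesis possible. Once this structure is in hand, the rest is linear algebra; in particular, no quantitative input about the rate at which $\lambda^{(n_k)}_{\beta^*} \to 0$ or the possible growth of $|\omega^{(n_k)}_{\beta^*}|$ appears to be needed.
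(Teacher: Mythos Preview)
Your proof is correct and uses the same two ingredients as the paper: the explicit computation of $K = \textup{Ker}(\Omega_{\pi^*})$ (and hence of $K^\bot$) for the rotation combinatorics $\pi^*$, and the invariance of $\pi_K(\omega^{(n_k)})$ coming from Lemma~\ref{lemma: kernel_projection}. The paper packages the final step slightly differently: rather than arguing by contradiction via the decomposition $\omega^{(n_k)} = v^* + u^{(n_k)}$ with $u^{(n_k)} \in K^\bot$, it observes directly that $K \subset e_{\beta^*}^\bot$, so that $\pi_K \circ \pi_{e_{\beta^*}^\bot} = \pi_K$; since $\pi_K\bigl(\pi_{e_{\beta^*}^\bot}(\omega^{(n_k)})\bigr) = \pi_K(\omega^{(n_k)}) = v^* \neq 0$ and orthogonal projections do not increase norms, one gets $\bigl|\pi_{e_{\beta^*}^\bot}(\omega^{(n_k)})\bigr| \geq |v^*|$, which is exactly a uniform lower bound on $\max_{\alpha \neq \beta^*} |\omega^{(n_k)}_\alpha|$. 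This is a two-line shortcut to the same conclusion, bypassing the explicit coordinate analysis of $u^{(n_k)}$. On the other hand, your treatment of the base point (applying Lemma~\ref{lemma: kernel_projection} from $(\lambda^{(n_1)}, \pi^*)$ rather than from $(\lambda, \pi)$) is more careful than the paper's, which tacitly assumes $\pi = \pi^*$.
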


\begin{proof}
Fix $k \geq 1$. By definition of $\pi^*$, we have
\begin{equation}
\label{eq: kernel}
\textup{Ker}(\Omega_{\pi^*}) = \left\{ v \in \R^\A \,\left|\, v_{\beta^*} = 0;\, v_{\alpha^*} = -\sum_{\delta \neq \alpha^*} v_\delta \right. \right\}.
\end{equation}
Then 
\[\pi_{\textup{Ker}(\Omega_{\pi^*})}\big( \pi_{\orth{e_{\beta^*}}}\big(\omega^{(n_k)}\big) \big) = \pi_{\textup{Ker}(\Omega_{\pi^*})}\big(\omega^{(n_k)}\big),\]
where $\orth{e_{\beta^*}} = \big\{ v \in \R^\A \,\big|\, v_{\beta^*} = 0\big\}$ and $\pi_{\orth{e_{\beta^*}}}: \R^\A \to \orth{e_{\beta^*}}$ denotes the orthogonal projection to $\orth{e_{\beta^*}}$. By Lemma \ref{lemma: kernel_projection},
\[\pi_{\textup{Ker}(\Omega_{\pi^*})}\big( \pi_{\orth{v_{\beta^*}}}\big(\omega^{(n)}\big) \big) = \pi_{\textup{Ker}(\Omega_{\pi^*})}(\omega) \neq 0.\]
Hence, there exists $c > 0$, depending only on $\pi^*$ and $\omega$, such that 
\[ \left| \pi_{\orth{v_{\beta^*}}}\big(\omega^{(n)}\big) \right| > c.\]
\end{proof}

We are now in a position to prove Theorem \ref{thm: HD_AIET}. 

\begin{proof}[Proof of Theorem \ref{thm: HD_AIET}]
Let $d \geq 2$. It follows from the results in \cite{cobo_piece-wise_2002} that for a.e. IET $T = (\lambda, \pi) \in \Delta_d \times \mathfrak{G}_d^0$ (not necessarily of rotation type) and for any AIET $f$ with $\gamma(f) = (\lambda, \pi)$, its log-slope vector $\omega \in \R^\A$ verifies $\omega \in E^{cs}(\lambda, \pi)$. Moreover, if $\omega \in E^s(\lambda, \pi)$ then $f$ is $C^\infty$ conjugated to $T$.

Let $\mathfrak{R}$ denote the Rauzy class of rotation type permutations in $\mathfrak{G}_d^0$. By the ergodicity of the Zorich map on $\Delta_d \times \mathfrak{R}$ and the fact that the Hausdorff dimension of the unique invariant measure of a uniquely ergodic AIET $f$ and of its renormalization $\cZ(f)$ is the same (since $\cZ(f)$ is just $f$ induced on a subinterval and the returns to this subinterval are uniformly bounded), it is sufficient to prove the theorem for IETs of rotation type with combinatorial datum given by $\pi^*$, with $\pi^*$ as in (\ref{eq: rotation}) (in fact it is sufficient to prove the theorem for any fixed rotation type combinatorial datum). 

By definition of $\pi^*$, we have
\begin{equation}
\label{eq: kernel_orth}
\textup{Ker}(\Omega_{\pi^*})^\bot = \textup{Vect} \left( e_{\beta^*}, \sum_{\delta \neq \beta^*} e_\delta \right).
\end{equation}
Recall that for a.e. $(\lambda, \pi) \in \Delta_d \times \mathfrak{R}$ we have
\begin{equation}
\label{eq: cs_s}
E^{cs}(\lambda, \pi) = \orth{\lambda}, \hskip1cm E^s(\lambda, \pi) = \orth{\lambda} \cap
\textup{Ker}(\Omega_\pi)^\bot.
\end{equation}
It follows from (\ref{eq: rotation}), (\ref{eq: kernel_orth}) and (\ref{eq: cs_s}) that any AIET $f$ with $\gamma(f) = (\lambda, \pi^*)$ and log-slope vector $\omega \in \R^\A$ can be seen as a 2-IET if and only if $\omega \in E^s(\lambda, \pi^*)$. 

Taking into account these observations, the theorem will be proved if we show that, for a.e. $\lambda \in \Delta_d$ and for any AIET $f$ with $\gamma(f) = (\lambda, \pi^*)$ and log-slope $\omega \in E^{cs}(\lambda, \pi^*) \setminus E^s(\lambda, \pi^*)$, the Hausdorff dimension of its unique invariant measure $\mu_f$ is equal to zero.

Consider $(\lambda, \pi^*) \in \Delta_d \times \mathfrak{R}$ for which Lemma \ref{lemma: generic_condition} (and in particular Corollary \ref{cor: generic_condition}) and Corollary \ref{cor: central_bound_below} hold. By Corollary \ref{cor: central_bound_below} and up to taking a subsequence, we can assume WLOG that there exists $\alpha \in \A \setminus \{\beta^*\}$ such that (\ref{eq: log_slope_cond}) holds. Notice that the set of such $\lambda$ defines a full-measure set in $\Delta_d$.

Let $f$ be an AIET with combinatorial rotation number $\gamma(f) = (\lambda, \pi^*)$, log-slope $\omega \in E^{cs}(\lambda, \pi^*) \setminus E^s(\lambda, \pi^*)$ and unique invariant measure $\mu_f$. Then, $\dim_H(\mu_f) = 0$ by applying Proposition \ref{prop: criterion} to the towers given by (\ref{eq: AIET_Rohlin_towers}), which are defined using Corollary \ref{cor: generic_condition}. Notice that these towers verify the hypotheses in Proposition \ref{prop: criterion} by Corollaries \ref{cor: generic_condition} and \ref{cor: central_bound_below}.
\end{proof}

The remaining of this work concerns the proof Lemma \ref{lemma: generic_condition}.

\subsection{Proof of Lemma \ref{lemma: generic_condition}}

Lemma \ref{lemma: generic_condition} will be an application of the following ``Borel-Cantelli lemma" for the Zorich map, due to R. Aimino et al. \cite[Theorem 2.18]{aimino_recurrence_2017}.

Before stating (a simplified version of) this result, let us introduce some notation. Let $\mathfrak{R}$ be a fixed Rauzy class. Let 
\[ \mathcal{P}_\mathfrak{R} = \{ \Delta_{\pi, 0} \times \{\pi\}, \Delta_{\pi, 1} \times \{\pi\} \mid \pi \in \mathfrak{R}\},\]
and define
\[ \mathcal{P}_\mathfrak{R}^n = \bigvee_{i = 0}^{n - 1} \mathcal{R}^{-i}(\mathcal{P}_\mathfrak{R}),\]
for any $n \geq 1$. Notice that any $B \in \mathcal{P}_\mathfrak{R}^n$ is contained in a unique simplex of the form $$\Delta_\pi^\epsilon = \Delta_{\pi, \epsilon} \times \{\pi\},$$ for some $\pi \in \mathfrak{R}$ and $\epsilon \in \{ 0, 1\}$. For any $\epsilon \in \{0, 1\}$, we denote 
\[\Delta_\mathfrak{R}^\epsilon = \bigcup_{\pi \in \mathfrak{R}} \Delta_\pi^\epsilon.\]
Given $A \subset \Delta_\pi^\epsilon$, we denote by $\partial A$ its boundary with respect to the usual Euclidean distance in $\Delta_{\pi, \epsilon}$. Similarly, for any $\delta > 0$, we denote by $B_\delta(A)$ the $\delta$-neighbourhood of $A$ with respect to the usual Euclidean distance in $\Delta_{\pi, \epsilon}$.

\begin{theorem}[Borel-Cantelli lemma for Zorich acceleration {\cite{aimino_recurrence_2017}}] 
\label{thm: borel_cantelli}
Let $\mathfrak{R}$ be a fixed Rauzy class, $n \geq 1$ and $\epsilon \in \{0, 1\}$. Suppose $B \in \mathcal{P}_\mathfrak{R}^n$ verifies $B \subset \Delta_\mathfrak{R}^\epsilon$ and $\overline{B} \subset \Delta_d \times \mathfrak{R}$. 

Then, for any sequence $\{ A_n\}_{ n \geq 1}$ of subsets of $B$ such that 
\[\sum_{n \geq 1} \mu(A_n) = +\infty, \hskip1cm \sup_{n \geq 1}\, \overline{\lim_{\delta \searrow 0}} \frac{|B_\delta(\partial A_n)|}{\delta^\alpha} < +\infty,\]
for some $0 < \alpha < 1$, we have
\[ \frac{1}{E_n} \sum_{i = 1}^n \chi_{A_i} \circ \mathcal{Z}^{2i} (x) \to 1, \]
for $\mu$-a.e. $x \in \Delta_\mathfrak{R}^\epsilon$, where $E_n = \sum_{i = 1}^n \mu(A_i)$.
\end{theorem}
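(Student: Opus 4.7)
The plan is to prove this quantitative dynamical Borel--Cantelli statement by combining a spectral gap for the transfer operator of $\mathcal{Z}$ on an appropriate Banach space with a second‐moment estimate fed into a Gál--Koksma type lemma.

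First, I would set up the functional–analytic framework. Restricted to any compact subset of $\Delta_d \times \mathfrak{R}$ staying away from the boundary $\partial \Delta_d$, the Zorich map $\mathcal{Z}$ is a uniformly expanding Markov map with bounded distortion on each cylinder of $\mathcal{P}_\mathfrak{R}^n$, and its transfer operator $\mathcal{L}$ admits a spectral gap on a Banach space $\mathcal{B}$ of quasi--Hölder functions of exponent $\alpha$, in the sense of Keller--Saussol. The key feature of this space is that the norm of an indicator $\chi_A$ is controlled by $\sup_{\delta > 0}\delta^{-\alpha}|B_\delta(\partial A)|$, so the hypothesis
\[
\sup_{n \geq 1}\ \overline{\lim_{\delta \searrow 0}} \frac{|B_\delta(\partial A_n)|}{\delta^\alpha} < +\infty
\]
ensures $\sup_{n \geq 1} \|\chi_{A_n}\|_\mathcal{B} < +\infty$. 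The assumption $\overline{B} \subset \Delta_d \times \mathfrak{R}$ is precisely what makes this uniform control possible, as it keeps the relevant dynamics away from the singularity set where distortion is unbounded.

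Second, from the spectral gap of $\mathcal{L}$ I would derive an exponential decay of correlations: there exist constants $C > 0$ and $\theta \in (0, 1)$ such that for every $f \in \mathcal{B}$, every $g \in L^1(\mu)$, and every $k \geq 0$,
\[
\left| \int (f \circ \mathcal{Z}^k)\, g\, d\mu - \int f\, d\mu \int g\, d\mu \right| \leq C\, \theta^{k}\, \|f\|_\mathcal{B}\, \|g\|_{L^1}.
\]
Applied to $f = \chi_{A_j}$ and $g = \chi_{A_i}$ with $j \geq i$, together with the $\mathcal{Z}$--invariance of $\mu$, this gives
\[
\left| \mu\bigl(\mathcal{Z}^{-2i}(A_i) \cap \mathcal{Z}^{-2j}(A_j)\bigr) - \mu(A_i)\mu(A_j) \right| \leq C'\, \theta^{2(j-i)}\, \mu(A_i),
\]
for some constant $C'$ independent of $i, j, n$.

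Third, set $S_n(x) = \sum_{i = 1}^n \chi_{A_i}(\mathcal{Z}^{2i}(x))$, so $\int S_n\, d\mu = E_n$. Expanding the square of $S_n - E_n$, bounding the diagonal terms by $\mu(A_i)$ and the off–diagonal terms via the correlation estimate above, the geometric factor $\theta^{2(j - i)}$ sums out and yields
\[
\int (S_n - E_n)^2\, d\mu \leq K\, E_n,
\]
with $K$ independent of $n$. Since $E_n \to +\infty$ by the divergence hypothesis, a classical Gál--Koksma (Sprindzuk) lemma then upgrades this variance bound to pointwise convergence $S_n/E_n \to 1$ for $\mu$–a.e.\ $x$ in the support of the construction, and ergodicity of $\mathcal{Z}$ on $\Delta_\mathfrak{R}^\epsilon$ propagates the statement to $\mu$–a.e.\ $x \in \Delta_\mathfrak{R}^\epsilon$.

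The main technical obstacle is the first step: constructing the Banach space $\mathcal{B}$ and verifying the Lasota--Yorke inequality for the transfer operator of $\mathcal{Z}$, which is delicate because $\mathcal{Z}$ has countably many inverse branches and unbounded expansion near $\partial \Delta_d$. The hypothesis that $B$ is a cylinder with $\overline{B} \subset \Delta_d \times \mathfrak{R}$ is what makes the analysis tractable, since it reduces the problem to a localized, uniformly expanding piecewise smooth Markov system where the Keller--Saussol quasi-Hölder machinery applies and delivers both the spectral gap and the boundary-regularity control on $\|\chi_{A_n}\|_\mathcal{B}$.
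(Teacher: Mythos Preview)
The paper does not prove this theorem at all: it is quoted as \cite[Theorem 2.18]{aimino_recurrence_2017} and used as a black box in the proof of Lemma~\ref{lemma: generic_condition}. So there is no ``paper's own proof'' to compare against.

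That said, your outline is essentially the strategy of Aimino et al.: one works with the quasi-H\"older (Saussol) Banach space so that the boundary-regularity hypothesis bounds $\|\chi_{A_n}\|_{\mathcal{B}}$ uniformly, establishes a spectral gap and hence exponential decay of correlations for the induced/accelerated map on a cylinder compactly contained in the simplex, and then feeds the resulting covariance bound into a G\'al--Koksma/Sprindzuk argument to upgrade $\sum \mu(A_n)=+\infty$ to the strong Borel--Cantelli conclusion $S_n/E_n\to 1$. Two small points worth flagging if you flesh this out: the iterate $\mathcal{Z}^{2}$ (rather than $\mathcal{Z}$) appears because $\mathcal{Z}$ swaps the two type-components $\Delta_\mathfrak{R}^0$ and $\Delta_\mathfrak{R}^1$, so one really works with the second iterate restricted to $\Delta_\mathfrak{R}^\epsilon$; and the spectral gap is obtained not for $\mathcal{Z}$ globally but for a suitable first-return (or jump) transformation to the compactly contained cylinder $B$, which is where the hypothesis $\overline{B}\subset\Delta_d\times\mathfrak{R}$ is genuinely used.
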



We are now in position to prove Lemma \ref{lemma: generic_condition}.

\begin{proof}[Proof Lemma \ref{lemma: generic_condition}] Let $\pi_* \in \mathfrak{G}_d^0$ be the pre-image of $\pi^*$ by a bottom movement in the Rauzy graph. Denote $\delta^*$ the last letter in the top row of $\pi_*$. Notice that the last letter in the bottom rows of $\pi^*$ and $\pi_*$ is $\beta^*$.

In the following, we denote by $\mathfrak{R}$ the set of rotation type permutations. Let $\gamma$ be a finite path in the Rauzy graph, starting with a top movement and ending at $\pi_*$ through a bottom type movement, such that the associated matrix $A_\gamma$ is positive. 
We denote by $\Delta_\gamma$ the simplex in $\Delta_d \times \mathfrak{R}$ defined by the $(\lambda, \pi) \in \Delta_d \times \mathfrak{R}$ following the path $\gamma$ under Rauzy-Veech induction. 

Let $0 < c_0 < \frac{1}{10d}$ and define
\[ \widehat{A}_n = \left\{ \lambda \in \Delta_d \,\left|\, \min\left\{c_0, \frac{1}{nC(n)} \right\}> \lambda_{\beta^*} - \lambda_{\delta^*} > 0; \, \min_{\alpha \in \A} \lambda_\alpha > c_0 \right. \right\},\]
for any $n \geq 1$. Clearly
\begin{equation}
\label{eq: lower_bound_set}
|\widehat{A}_n| \geq \frac{c_1}{nC(n)}
\end{equation}
for some $c_1 > 0$ depending only on $c_0$ and $d$. Notice that for any $\lambda \in \widehat{A}_n$, $(\lambda, \pi_*)$ is of bottom type. Moreover, its RV-renormalization $(\lambda', \pi^*)$ is of top type and verifies
\begin{equation}
\label{eq: lengths}
\min_{\alpha \neq \beta^*} \lambda_\alpha' > \max\left\{ c_0, c_d n C(n)\lambda_{\beta^*}' \right\},
\end{equation}
for some $c_d > 0$ depending only on $d$. We assume WLOG that $c_d = 1$. Otherwise, it suffices to slightly modify the definition of $\widehat{A}_n$ to compensate for the constant $c_d$.

\begin{claim}
For any $n \geq 1$, let
\[ A_n = \left\{\lambda \in \Delta_\gamma \, \left| \, \frac{A_\gamma^{-1} \lambda}{\big| A_\gamma^{-1} \lambda \big|_1} \in \widehat{A}_n \right. \right\}. \]
There exists $N \in \N$ and $c_\gamma > 0$, depending only on $\gamma$, such that for any $(\lambda, \pi) \in A_n$, $\pi^{(N)} = \pi^*$,
\begin{equation}
\label{eq: length_estimates}
\min_{\alpha \neq \beta^*} \lambda_\alpha^{(N)} > \max\left\{ c_0, n C(n)\lambda^{(N)}_{\beta^*} \right\}
\end{equation}
and
\begin{equation}
\label{eq: height_estimates}
\min_{\alpha, \beta \in \A}\frac{h^{(N)}_\alpha }{h^{(N)}_\beta} > c_\gamma.
\end{equation}

\end{claim}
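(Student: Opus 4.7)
The plan is to set $N$ equal to the number of Zorich iterations that the RV path $\gamma$, followed by one additional bottom-type RV move, accounts for. Since the hypothesis $(\lambda, \pi) \in A_n \subset \Delta_\gamma$ fixes the entire Rauzy-Veech trajectory for its first $|\gamma|+1$ RV steps, this $N$ depends only on $\gamma$.

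First I would verify that $\pi^{(N)} = \pi^*$. Starting from $(\lambda, \pi) \in A_n$, the first $|\gamma|$ RV steps traverse $\gamma$ and land at $(\tilde\lambda, \pi_*)$ with $\tilde\lambda = A_\gamma^{-1}\lambda / |A_\gamma^{-1}\lambda|_1 \in \widehat{A}_n$. By design of $\widehat{A}_n$ we have $\tilde\lambda_{\beta^*} > \tilde\lambda_{\delta^*}$, so $(\tilde\lambda, \pi_*)$ is of bottom type and one more bottom RV step is admissible, yielding $(\lambda', \pi^*)$ of top type (as already recorded in the excerpt preceding the claim). Consequently, the extra bottom RV step merges with the last bottom run of $\gamma$ into a single Zorich block that closes exactly at $\pi^*$, so that $\cZ^N(\lambda, \pi) = (\lambda', \pi^*)$. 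After identifying $\lambda^{(N)}$ with $\lambda'$, the length estimate (\ref{eq: length_estimates}) is precisely (\ref{eq: lengths}), already derived in the text (with $c_d=1$ by the WLOG reduction).

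For the height estimate (\ref{eq: height_estimates}), the key observation is that, because the whole RV path is fixed, the Zorich cocycle matrix $B^N(\lambda, \pi)$ equals the fixed integer matrix $A_\gamma \cdot A_b$, where $A_b$ denotes the elementary matrix for a single bottom RV step at $\pi_*$, independently of $(\lambda, \pi) \in A_n$. Hence $h^{(N)} = (A_\gamma A_b)^T \overline{1}$ is a fixed vector of strictly positive integers (positive because $A_\gamma$ was assumed positive, and $A_b$ preserves this), and one may set $c_\gamma := \min_{\alpha, \beta \in \A} h^{(N)}_\alpha / h^{(N)}_\beta > 0$, which depends only on $\gamma$.

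The main delicate point is the bookkeeping between RV and Zorich step counts: one must verify that the extra bottom RV step after $\gamma$ does not push the Zorich trajectory past $\pi^*$ and absorb further RV steps whose types would depend on $\lambda$. This is precisely what the two ingredients recalled above guarantee, namely that $(\tilde\lambda, \pi_*)$ is of bottom type (so the extra step is admissible and belongs to the same Zorich block as the final bottom run of $\gamma$) and that $(\lambda', \pi^*)$ is of top type (so the Zorich block closes precisely at $\pi^*$, making $N$ independent of $\lambda$). Apart from this, the rest of the argument is routine.
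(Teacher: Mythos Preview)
Your proof is correct and follows essentially the same approach as the paper's. Both arguments identify $N$ as the number of Zorich steps corresponding to the $|\gamma|+1$ Rauzy--Veech steps of the path $\gamma' = \gamma * \{\pi^*\}$, read the length estimate directly from (\ref{eq: lengths}), and deduce the height ratio bound from the positivity of $A_{\gamma'}$; your treatment of the Zorich bookkeeping (why the last bottom block closes exactly at $\pi^*$, making $N$ depend only on $\gamma$) is more explicit than the paper's, which simply asserts the existence of such an $N$.
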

\begin{proof}[Proof of the Claim] Fix $n \geq 1$. Notice that any $(\lambda, \pi) \in A_n$ follows the path $\gamma' = \gamma \ast \{ \pi^*\}$ under Rauzy-Veech induction. Furthermore, there exists $1 < N \leq |\gamma| + 1$, depending only on $\gamma$, such that $(\lambda^{(N)}, \pi^{(N)})$ coincides with the $|\gamma'|$-th RV renormalization of $(\lambda, \pi)$. 

Hence $\mathcal{R}^{|\gamma|}(\lambda, \pi) \in \widehat{A}_n \times \{ \pi_*\}$ and $\mathcal{R}^{|\gamma| + 1}(\lambda, \pi) = (\lambda^{(N)}, \pi^{(N)})$. Therefore, $\pi^{(N)} = \pi^*$ and (\ref{eq: length_estimates}) follows directly from (\ref{eq: lengths}). Since $A_{\gamma'}$ is positive, that is, if $A_{\gamma'} = (a_{\alpha \beta})_{\alpha, \beta \in \A}$ then $a_{\alpha \beta} > 0$ for any $\alpha, \beta \in \A$, we have
\[ \min_{\alpha, \beta \in \A}\frac{h^{(N)}_\alpha }{h^{(N)}_\beta} = \min_{\alpha, \beta \in \A}\frac{\sum_{\delta \neq \alpha} a_{\delta \alpha}}{\sum_{\delta \neq \alpha} a_{\delta \beta}} > c_\gamma,\]
 for some $c_\gamma > 0$ depending only on $\gamma$. 
\end{proof}

Since $c_\gamma$ in the previous claim does not depend on $c_0$, we assume WLOG that $c_\gamma> c_0$, by taking $c_0$ smaller if necessary. Since the initial permutation in $\gamma'$ is of top type and the matrix $A_{\gamma'}$ is positive, it follows that $\overline{\Delta_{\gamma'}} \subset \Delta_d^0$. In particular $A_n \subset \Delta_{\gamma'} \subset \overline{\Delta_{\gamma'}} \subset \Delta_d^0$. 

Therefore, since $\mu_\cZ$ is equivalent to the Lebesgue measure in $\Delta_d$ and its density is uniformly bounded away from the boundary of $\Delta_d$ (see Section \ref{sc: Zorich_map}), there exists $c_2 > 0$, depending only on $\gamma$, such that $$\mu(A_n) \geq c_2|A_n|.$$ Moreover, by definition of $A_n$ and (\ref{eq: lower_bound_set}),
 $$|A_n| \geq c_3 |\widehat{A}_n| \geq \frac{c_1c_3}{nC(n)},$$
for some $c_3 > 0$ depending only on $\gamma$. Hence
\[ \sum_{n \geq 1} \mu(A_n) = +\infty = \sum_{n \geq 1} \mu(A_{2n + N}), \]
since $C(n)$ is an increasing sequence verifying (\ref{eq: conditions_increasing_sequence}). 

Noticing that, for any $n \geq 1$, the sets $A_n$ are subsimplexes of $\Delta_{\gamma'} \in \mathcal{P}_{\mathfrak{R}}^N$, it follows by Theorem \ref{thm: borel_cantelli} that the set 
\[ \bigcap_{m \geq 1}\bigcup_{n \geq m} \mathcal{Z}^{-2n }(A_{2n + N})\]
has full measure in $\Delta_\mathfrak{R}^0$. Thus, for a.e. $(\lambda, \pi) \in \Delta_\mathfrak{R}^0$, there exists an increasing sequence $\{n_k\}_{k \geq 1} \subset \N$ such that
\[ (\lambda^{(n_k - N)}, \pi^{(n_k - N)}) \in A_{n_k},\]
for all $k \geq 1$. In particular, it follows from the claim that $\pi^{(n_k)} = \pi^*$ and $\lambda^{(n_k)}, h^{(n_k)}$ verify (\ref{eq: length_estimates}), (\ref{eq: height_estimates}), when replacing $N$ by $n_k$, for any $k \geq 1$. 

By considering the set 
\[ \mathcal{Z}^{-1}\left(\bigcap_{m \geq 1}\bigcup_{n \geq m} \mathcal{Z}^{-2n }(A_{2n + N + 1})\right)\]
and recalling that $\mathcal{Z}^{-1}(\Delta^0_\mathfrak{R}) = \Delta^1_\mathfrak{R}$ (up to a zero measure set), it is easy to show that the same conclusions hold for a.e. $(\lambda, \pi) \in \Delta_\mathfrak{R}^1$. This finishes the proof. 
\end{proof}

\bibliographystyle{acm}
\bibliography{Bibliography.bib}
\end{document}